\newtheorem{theorem}{Theorem}[section]
\newtheorem{corollary}{Corollary}[section]
\newtheorem{proposition}{Proposition}[section]
\newtheorem*{theorem*}{Theorem}
\theoremstyle{definition}
\numberwithin{equation}{subsection}
\newcommand{\ignore}[1]{}
\newcommand{\mynote}[1]{}
\begin{document}
\setcounter{section}{0}
\title{\bf On $R$-triviality of $F_4$}
\author{Maneesh Thakur }
\date{}
\maketitle
\begin{abstract}
\noindent
\it{It is known that simple algebraic groups of type $F_4$ defined over a field $k$ are precisely the full automorphism groups of Albert algebras over $k$. We explore $R$-triviality for the group $\text{\bf Aut}(A)$ when $A$ is an Albert algebra. In this paper, we consider the case when $A$ is an Albert division algebra, that arises from the first Tits construction. We prove that $\text{\bf Aut}(A)$ is $R$-trivial, in the sense of Manin. }
\end{abstract} 
\noindent
\small{{\it Keywords: Exceptional groups, Algeraic groups, Albert algebras, Structure group, Kneser-Tits conjecture, $R$-triviality.}}  

\section{\bf Introduction}
The aim of this paper is to start an investigation on the rationality properties of simple algebraic groups of type $F_4$, defined over fields.
One knows that for a field $k$, simple algebraic groups of type $F_4$ are precisely the full groups of automorphisms of Albert algebras (exceptional Jordan algebras) over $k$. Recently there has been a flurry of activity in studying rationality properties of algebraic groups. For example, in the papers (\cite{Gar, G, P-T-W, Pra, Th-1, Th-2, Th-3}, \cite{A-C-P-2}), the Kneser-Tits problem was settled in the affirmative, via $R$-triviality of certain simple algebraic groups, or via the computation of the group of $R$-equivalent ponts for the algebraic group. The fundamental work of (\cite{G}) connects Whitehead groups of an isotropic algebraic group $G$ with the quotient $G(k)/R(G(k))$, where $R(G(k))$ is the normal subgroup of $G(k)$ of points $R$-equivalent to the identity element of $G$. Hence, it reduces the Kneser-Tits problem, or the problem of computing the Whitehead group, to computing the quotient $G(k)/R(G(k))$. In this paper, we take up the case of first Tits construction Albert algebras. If the algebra is reduced and arises from first construction, one knows that $G=\text{\bf Aut}(A)$ is split over $k$, hence is $R$-trivial. We take up the case of Albert division algebras that arise from the first Tits construction in this paper. The case of reduced Albert algebras which contain non-zero nilpotents corresponds to isotropic group of type $F_4$ and they are known to be $R$-trivial. The case of groups of type $F_4$ that correspond to reduced Albert algebras without nilpotents and those that correspond to Albert division algebras that are pure second Tits constructions, is work in progress.

We would like to mention that at the time this preprint was completed, S. Alsaody, V. Chernousov and A. Pianzola posted a preprint on the arXiv (\cite{A-C-P-1}), proving the R-triviality for groups of type $F_4$ that arise from the first Tits constructions, over fields of characteristic different from $2$ and $3$. Their proof relies on certain cohomological techniques. Our paper proves the $R$-triviality of algebraic groups of type $F_4$ defined over fields of arbitrary characteristic, while the methods are via studying $R$-triviality for certain subgroups of the automorphism groups as well as the structure groups of Albert algebras. 

\section{\bf Preliminaries} We refer the reader to (\cite{P2}), (\cite{SV}) and (\cite{KMRT}) for basic material on Albert algebras. In this section, we quickly introduce some notions related to Albert algebras that are indispensable. All base fields will be 
assumed to be infinite of arbitrary characteristic unless specified otherwise. The matrial below is being reproduced from (\cite{Th-2}) for convenience. \\
\noindent
For this, we refer to (\cite{PR7} or \cite{PR5}). Let $J$ be a finite dimensional vector space over a field $k$. A \emph{cubic norm structure} on $J$ consists of a triple $(N,\#, c)$, where $c\in J$ is a base point, called the \emph{identity element} of the norm structure and 
\begin{enumerate} 
\item $N:J\rightarrow k$ is a \emph{cubic form} on $J$,
\item $N(c)=1$,
\item the \emph{trace form} $T:J\times J\rightarrow k$, defined by $T(x,y):=-\Delta^x_c\Delta^y log N$, is nondegenerate, 
\item the \emph{adjoint} map $\#:J\rightarrow J$ defined by $T(x^{\#},y)=\Delta^y_x N$, is a quadratic map such that 
\item $x^{\#\#}=N(x)x$,
\item $c^{\#}=c$,
\item $c\times x=T(x)c-x$, where $T(x):=T(x,c)$ is the \emph{trace} of $x$ and $x\times y:=(x+y)^{\#}-x^{\#}-y^{\#}$,
\end{enumerate}
and these conditions hold in all scalar base changes of $J$. Here $\Delta^y_xf$ is the \emph{directional derivative} of a polynomial function $f$ on $J$, in the direction $y$, evaluated at $x$ and $\Delta^y log f$ denotes the \emph{logarithmic derivative} of $f$ in the direction $y$. For details, we refer to (\cite{J1}, Chap. VI, Sect. 2).
Let $x\in J$. Define  
$$U_x(y):=T(x,y)x-x^{\#}\times y,~y\in J.$$ 
Then with $1_J:=c$ and $U_x$ as above, $J$ is a unital \emph{quadratic Jordan algebra} (see \cite{McK}), denoted by $J(N,c)$. The (linear) operators $U_x$ are called the \emph{$U$-operators} of $J$. An element $x\in J$ is defined to be \emph{invertible} if $N(x)\neq 0$ and $x^{-1}:=N(x)^{-1}x^{\#}$. The structure $J(N,c)$ is a \emph{division algebra} if $U_x$ is surjective for all $x\neq 0$, or equivalently, $N(x)\neq 0$ for all $x\neq 0$. Special Jordan algebras of degree $3$ provide imporant class of examples, we list them below for our purpose: more precisely.\\
\vskip1mm
\noindent
{\bf Example.} Let $D$  be a separable associative algebra over $k$ of degree $3$. Let $N_D$ denote its norm and $T_D$ the trace. Let $\#:D\rightarrow D$ be the adjoint map. Then $(N_D, \#, 1_D)$ is a cubic norm structure, where $1_D$ is the unit element of $D$. We get a quadratic Jordan algebra structure on $D$, which we will denote by $D_+$. \\
\noindent
 Let $(B,\sigma)$ be a separable associative algebra over $k$ with an involution $\sigma$ of the second kind (over its center). With the unit element $1$ of $B$ and the restriction of the norm $N_B$ of $B$ to $(B,\sigma)_+:=\{b\in B|\sigma(b)=b\}$, we obtain a cubic norm structure and hence a Jordan algebra structure on $(B,\sigma)_+$ which is a substructure of $B_+$. \\
\noindent
\vskip1mm
\noindent
    {\bf Tits process :} Let $D$ be a finite dimensional associative $k$-algebra of degree $3$ with norm $N_D$ and trace $T_D$. Let $\lambda\in k^{\times}$. On the $k$-vector space
    $D\oplus D\oplus D$, we define a cubic norm structure as below.  
$$1:=(1,0,0),~N((x,y,z)):=N_D(x)+\lambda N_D(y)+\lambda^{-1}N_D(z)-T_D(xyz),$$
$$(x,y,z)^{\#}:=(x^{\#}-yz,\lambda^{-1}z^{\#}-xy,\lambda y^{\#}-zx).$$  
    The Jordan algebra associated to this norm structure is denoted by $J(D,\lambda)$. The algebra $D_+$ is a subalgebra of $J(D,\lambda)$ through the first summand.
    Then $J(D,\lambda)$ is a division algebra if and only if $\lambda\notin N_D(D)$ (see \cite{PR7}, 5.2). This construction is called the \emph{first Tits process} arising from the parameters $D$ and $\lambda$. 

    Let $K$ be a quadratic \'{e}tale extension of $k$ and $B$ a separable associative algebra of degree $3$ over $K$ with a $K/k$-involution $\sigma$. Let $x\mapsto \overline{x}$ denote
    the nontrivial $k$-automorphism of $K$. For an \emph{admissible pair} $(u,\mu)$, i.e., $u\in (B,\sigma)_+$ such that $N_B(u)=\mu\overline{\mu}$ for some $\mu\in K^{\times}$, define a cubic norm structure on the $k$-vector space $(B,\sigma)_+\oplus B$ as follows:
$$N((b,x)):=N_B(b)+T_K(\mu N_B(x))-T_B(bxu\sigma(x)),$$
$$(b,x)^{\#}:=(b^{\#}-xu\sigma(x), \overline{\mu}\sigma(x)^{\#}u^{-1}-bx),~1:=(1_B,0).$$ 
The Jordan algebra obtained from this cubic norm structure is denoted by $J(B,\sigma,u,\mu)$. Note that $(B,\sigma)_+$ is a subalgebra of $J(B,\sigma, u,\mu)$ through the first summand. Then $J(B,\sigma,u,\mu)$ is a division algebra if and only if $\mu$ is not a norm from $B$ (see \cite{PR7}, 5.2). This construction is called the \emph{second Tits process} arising from the parameters $(B,\sigma),u$ and $\mu$.

When $K=k\times k$, then $B=D\times D^{\circ}$ and $\sigma$ is the switch involution, where $D^{\circ}$ is the opposite algebra of $D$. In this case, the second construction $J(B,\sigma,u,\mu)$ can be identified with a first construction $J(D,\lambda)$. 
\vskip1mm
\noindent
{\bf Tits constructions of Albert algebras :} The Tits process starting with a central simple algebra $D$ and $\lambda\in k^{\times}$ yields the \emph{first Tits construction} Albert algebra $A=J(D,\lambda)$ over $k$. 
Similarly, in the Tits process if we start with a central simple algebra $(B,\sigma)$ with center a quadratic \'{e}tale algebra $K$ over $k$ and an involution $\sigma$ of the second kind, $u,\mu$ as described above, we get the \emph{second Tits construction} Albert algebra $A=J(B,\sigma,u,\mu)$ over $k$. 
\noindent
One knows that all Albert algebras can be obtained via Tits constructions. 

An Albert algebra is a division algebra if and only if its (cubic) norm $N$ is anisotropic over $k$ (see \cite{KMRT}, \S 39).
If $A=J(B,\sigma,u,\mu)$ as above, then $A\otimes_kK\cong J(B,\mu)$ as $K$-algebras, where $K$ is the center of $B$ (see \cite{KMRT}, 39.4).
\vskip1mm
\noindent 
Let $A$ be an Albert algebra over $k$. If $A$ arises from  the first construction, but does not arise from the second construction then we call $A$ a
    \emph{pure first construction} Albert algebra. Similarly, \emph{pure second construction} Albert algebras are defined as those which do not arise from the first Tits construction. 
\vskip1mm
\noindent
For an Albert division algebra $A$, any subalgebra is either $k$ or a cubic subfield of $A$ or of the form $(B,\sigma)_+$ for a degree $3$ central simple algebra $B$ with an involution $\sigma$ of the second kind over its center $K$, a quadratic \'{e}tale extension of $k$ (see \cite{J1}, Chap. IX, \S 12, Lemma 2,  \cite{PR5}). 
\vskip1mm
\noindent
    {\bf Norm similarities of Albert algebras :} Let $A$ be an Albert algebra over $k$ and $N$ its norm map. By a \emph{norm similarity} of $A$ we mean a bijective $k$-linear map $f:A\rightarrow A$ such that $N(f(x))=\nu(f)N(x)$ for all $x\in A$ and some $\nu(f)\in k^{\times}$. When $k$ is infinite, the notions of norm similarity and isotopy for degree $3$ Jordan algebras coincide (see \cite{J1}, Chap. VI, Thm. 6, Thm. 7).
    
\noindent
Let $A$ be an Albert algebra over $k$ with norm map $N$. For $a\in A$ the $U$-operator $U_a$ is given by $U_a(y):=T(a,y)a-a^{\#}\times y,~y\in A$. When $a\in A$ is invertible, one knows that $U_a$ is a norm similarity of $A$, in fact, for any $x\in A,~N(U_a(x))=N(a)^2N(x)$. 

For a central simple algebra $D$ of degree $3$ over a field $k$, the adjoint map $a\mapsto a^{\#}$ satisfies $N_D(a)=aa^{\#}=a^{\#}a,~\forall~a\in D$. Motreover, $(xy)^{\#}=y^{\#}x^{\#}$ for all $x,y\in D$. It also follows that $N(x^{\#})=N(x)^2$. 
\vskip1mm
\noindent
{\bf Algebraic groups arising from Albert algebras :} In this paper, for a $k$-algebra $X$ and a field extension $L$ of $k$, $X_L$ will denote the $L$-algebra $X\otimes_kL$. Let $A$ be an Albert algebra over $k$ with norm $N$ and $\overline{k}$ be an algebraic closure of $k$. It is well known that the full group of automorphisms 
$\text{\bf Aut}(A):=\text{Aut}(A_{\overline{k}})$ is a simple algebraic group of type $F_4$ defined over $k$ and all simple groups of type $F_4$ defined over $k$ arise this way .
We will denote the group of $k$-rational points of $\text{\bf Aut}(A)$ by $\text{Aut}(A)$. It is known that $A$ is a division algebra if and only if the norm form $N$ of $A$ is anisotropic (see \cite{Spr}, Thm. 17.6.5). Albert algebras whose norm form is isotropic over $k$, i.e. has a nontrivial zero over $k$, are called \emph{reduced}. 

The \emph{structure group} of $A$ is the full group $\text{\bf Str}(A)$ of norm similarities of $N$, is a connected reductive group over $k$, of type $E_6$. We denote by $\text{Str}(A)$ the group of $k$-rational points $\text{\bf Str}(A)(k)$.


The automorphism group $\text{\bf Aut}(A)$ is the stabilizer of $1\in A$ in $\text{\bf Str}(A)$.
In the paper, base change of an object $X$ defined over a base field $k$ to an extension $L$ of $k$ will be denoted by $X_L$. 
\vskip1mm
\noindent

\vskip1mm
\noindent
{\bf $R$-equivalence and $R$-triviality:}
Let $X$ be an irreducible variety over a field $k$ with $X(k)\neq \emptyset$. We define points $x,y\in X(k)$ to be $R$-equivalent 
if there exists a sequence $x_0=x, x_1,\cdots, x_n=y$ of points in $X(k)$ and rational maps $f_i:\mathbb{A}_k^1\rightarrow X,~1\leq i\leq n$, defined over $k$ and regular at $0$ and $1$, such that $f_i(0)=x_{i-1},~f_i(1)=x_i$ (see \cite{M}). 

Let $G$ be a connected algebraic group defined over $k$. The set of points in $G(k)$ that are $R$-equivalent to $1\in G(k)$, is a normal subgroup of $G(k)$, denoted by $RG(k)$. The set $G(k)/R$ of $R$-equivalence classes in $G(k)$ is in canonical bijection with $G(k)/RG(k)$ and thus has a natural group structure. We identify $G(k)/R$ with the group $G(k)/RG(k)$. This group is useful in studying rationality properties of $G$. 

Call $G$ \emph{$R$-trivial} if $G(L)/R=\{1\}$ for all field extensions $L$ of $k$. A variety $X$ defined over $k$ is defined to be $k$-\emph{rational} if $X$ is birationally isomorphic over $k$ to an affine space. One knows that $G$ is $k$-rational then $G$ is $R$-trivial (see \cite{Vos}, Chap. 6, Prop. 2).  Also, owing to the fact that tori of rank at most $2$ are rational, one knows that algebraic groups of rank at most $2$ are rational. 


\section{\bf Automorphisms of Albert algebras}
We continue to set up notation for the rest of the paper, again mostly following (\cite{Th-2}). Let $A$ be an Albert algebra over $k$ and $S\subset A$ a subalgebra. Denote by {\bf Aut}$(A/S)$ the (closed) subgroup of $\text{\bf Aut}(A)$ consisting of automorphisms of $A$ which fix $S$ pointwise and $\text{\bf Aut}(A,S)$ denotes the closed subgroup of automorphisms of $A$ leaving $S$ invariant.
The group of $k$-rational points of these groups will be denoted by ordinary fonts, for example $\text{Aut}(A)= \text{\bf Aut}(A)(k)$. One knows that when $A$ is a division algebra,
a proper subalgebra $S$ of $A$ is either $k$ or a cubic subfield or a $9$-dimensional degree $3$ Jordan subalgebra, which is of the form $D_+$ for a degree $3$ central division algebra $D$ over $k$ or of the form $(B,\sigma)_+$ for a central division algebra $B$ of degree $3$ over a quadratic field extension $K$ of $k$, with a unitary involution $\sigma$ over $K/k$ (see Thm. 1.1, \cite{PR6}). We recall below a description of some subgroups of $\text{\bf Aut}(A)$ which will be used in the paper, see (\cite{KMRT}, 39.B) and (\cite{FP}) for details.
\begin{proposition}[Prop. 3.1, \cite{Th-2}]\label{rational} Let $A$ be an Albert algebra over $k$.
\begin{enumerate} 
\item Suppose $S=D_+\subset A$ for $D$ a degree $3$ central simple $k$-algebra. Write $A=J(D,\mu)$ for some $\mu\in k^{\times}$. Then $\text{\bf Aut}(A/S)\cong\text{\bf SL}(1,D)$, the algebraic group of norm $1$ elements in $D$. 
\item Let $S=(B,\sigma)_+\subset A$ for $B$ a degree $3$ central simple algebra of degree $3$ over a quadratic extension $K$ of $k$ with an involution $\sigma$ of the second kind over $K/k$. Write $A=J(B,\sigma,u,\mu)$ for suitable parameters. Then 
$\text{\bf Aut}(A/S) \cong\text{\bf SU}(B,\sigma_u)$, where $\sigma_u:=Int(u)\circ\sigma$. 
\end{enumerate}
In particular, the subgroups described in (1) and (2) are simply connected, simple of type $A_2$, defined over $k$ and since these are rank-$2$ groups, they are are rational and hence are $R$-trivial. 
\end{proposition}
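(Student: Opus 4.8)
The plan is to realize each isomorphism by an explicit formula and then to establish surjectivity.

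\emph{Construction of the maps.} For (1), write $A=J(D,\mu)=D\oplus D\oplus D$ with the first Tits construction norm structure, so that $S=D_+$ is the first summand $\{(x,0,0):x\in D\}$. For $p\in D^{\times}$ with $N_D(p)=1$, put $\eta_p(x,y,z):=(x,\,yp,\,p^{-1}z)$. Since $N_D(p)=1$ one has $p^{\#}=p^{-1}$ and $(p^{-1})^{\#}=p$, so using $(ab)^{\#}=b^{\#}a^{\#}$ and associativity in $D$ a direct check on the displayed formulas gives $N(\eta_p(x,y,z))=N((x,y,z))$, that $\eta_p$ commutes with $\#$, and that $\eta_p(1)=1$; hence $\eta_p\in\text{\bf Aut}(A)$, and it visibly fixes $S$ pointwise. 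Since $\eta_p\circ\eta_q=\eta_{qp}$, the assignment $p\mapsto\eta_{p^{-1}}$ is an injective homomorphism of $k$-groups $\text{\bf SL}(1,D)\to\text{\bf Aut}(A/S)$. For (2), write $A=J(B,\sigma,u,\mu)=(B,\sigma)_+\oplus B$, so $S=(B,\sigma)_+=\{(b,0)\}$, and for $p\in B^{\times}$ with $N_B(p)=1$ and $\sigma_u(p)p=1$ (equivalently $pu\sigma(p)=u$) put $\theta_p(b,x):=(b,xp)$. Using $\sigma(x)^{\#}=\sigma(x^{\#})$, $N_B(\sigma(x))=\overline{N_B(x)}$, $p^{\#}=p^{-1}$ and $pu\sigma(p)=u$, the analogous computation shows $\theta_p\in\text{\bf Aut}(A/S)$, and $p\mapsto\theta_{p^{-1}}$ gives an injective $k$-homomorphism $\text{\bf SU}(B,\sigma_u)\to\text{\bf Aut}(A/S)$. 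In both cases $\eta_p$ and $\theta_p$ are $k$-linear on $A$ and depend $k$-morphically on $p$, so these are morphisms of algebraic $k$-groups.

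\emph{Surjectivity --- the main obstacle.} It remains to check that every automorphism of $A$ fixing $S$ pointwise lies in the image of the map just built. I would argue over $\overline{k}$, where $A$ becomes the split Albert algebra and $S$ a split $9$-dimensional subalgebra containing a frame $e_1,e_2,e_3$ of primitive orthogonal idempotents with $e_1+e_2+e_3=1$. An automorphism $\phi$ restricting to the identity on $S$ fixes the $e_i$, hence preserves the Peirce decomposition $A=\bigoplus_{i\leq j}A_{ij}$ and is trivial on $S$; analyzing the induced action on the Peirce components via the multiplication rules between them forces $\phi$ to be of the displayed form, and one may conclude equally well from the dimension count $\dim\text{\bf Aut}(A/S)=8=\dim\text{\bf SL}_3$ together with connectedness of $\text{\bf Aut}(A/S)$ and simplicity of $\text{\bf SL}_3$. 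This structure computation for the pointwise stabilizers is exactly what is carried out in (\cite{KMRT}, 39.B), \cite{FP} and (\cite{Th-2}, Prop.~3.1), so in practice I would cite it rather than reprove it; the Peirce bookkeeping is the one genuinely laborious ingredient.

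\emph{The concluding clause.} Over $\overline{k}$ both $D$ and $B$ split, so $\text{\bf SL}(1,D)_{\overline{k}}\cong\text{\bf SL}_3$ and $\text{\bf SU}(B,\sigma_u)_{\overline{k}}\cong\text{\bf SL}_3$; thus both groups are semisimple, simply connected, simple of type $A_2$, of rank $2$, and they are defined over $k$ since the defining data ($D$ and $\mu$ over $k$; $(B,\sigma)$ and $u$ with $\sigma$ a $K/k$-involution) are. By the fact recalled in the preliminaries --- tori of rank at most $2$ are rational, whence connected algebraic groups of rank at most $2$ are $k$-rational --- both groups are $k$-rational, and a $k$-rational connected group remains rational over every field extension and hence is $R$-trivial. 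This yields the final assertion.
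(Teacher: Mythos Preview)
Your proposal is correct, and there is little to compare against: the paper does not prove this proposition but merely recalls it from \cite{Th-2}, referring to \cite{KMRT}, 39.B and \cite{FP} for details. Your explicit formulas $\eta_p(x,y,z)=(x,yp,p^{-1}z)$ and $\theta_p(b,x)=(b,xp)$ are exactly the ones the paper itself uses later (see the maps $\mathcal{J}_p$ in the proof of Theorem~\ref{dist-R-triv} and the description in Proposition~\ref{aut-A-Bsigma}), your verification that they are automorphisms via the norm and adjoint formulas is straightforward and correct, and for the surjectivity step you end up citing the same sources the paper does; so your write-up is a fleshed-out version of what the paper leaves as a reference.
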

\vskip1mm
\noindent
We need the following fixed point theorem.
\noindent
\begin{theorem}[Thm. 4.1, \cite{Th-2}]\label{fixedpoint} Let $A$ be an Albert division algebra over $k$ and $\phi\in\text{Aut}(A)$ an automorphism of $A$. Then $\phi$ fixes a cubic subfield of $A$ pointwise.
\end{theorem}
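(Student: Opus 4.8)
The plan is to exploit the anisotropy of $G=\text{\bf Aut}(A)$ together with the description of the action of a maximal torus of $F_4$ on $A$. Since $A$ is a division algebra, its cubic norm $N$ is anisotropic over $k$, which is equivalent to $G$ having no proper parabolic $k$-subgroup, i.e.\ to $G$ being anisotropic over $k$. It is a standard consequence of the Borel--Tits theorem, valid over an arbitrary base field, that an anisotropic connected reductive $k$-group has no nontrivial unipotent $k$-rational points; hence the given $\phi\in\text{Aut}(A)=G(k)$ is a semisimple element of $G(k)$. Every semisimple rational element of a reductive $k$-group lies in a maximal torus defined over $k$, so $\phi\in T(k)$ for some maximal $k$-torus $T$ of $G$. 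Then $\phi$ fixes pointwise the $k$-subalgebra $A^{T}=\{x\in A: t(x)=x \text{ for all } t\in T\}$, and it therefore suffices to show that $A^{T}$ properly contains $k$.

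To see this I would base change to $\overline{k}$. Formation of $A^{T}$ is the kernel of a $k$-linear map built from the coaction, hence commutes with the flat extension $k\hookrightarrow\overline{k}$, so $A^{T}\otimes_{k}\overline{k}=(A\otimes_{k}\overline{k})^{T_{\overline{k}}}$ is the zero weight space of the $27$-dimensional module $A\otimes_{k}\overline{k}$ over the split maximal torus $T_{\overline{k}}$ of $F_4$. The nonzero weights of this module are exactly the $24$ short roots of $F_4$, each with multiplicity one, so the zero weight space has dimension $3$; in particular $\dim_{k}A^{T}=3>1$. Pick any $x\in A^{T}\setminus k$. The subalgebra $k[x]$ generated by $x$ is contained in $A^{T}$, and since $A$ is a division algebra the generic minimal polynomial $m_{x}(t)=t^{3}-T(x)\,t^{2}+T(x^{\#})\,t-N(x)$ has no root in $k$: a root $\lambda\in k$ would give $N(\lambda\cdot 1-x)=m_{x}(\lambda)=0$ with $\lambda\cdot 1-x\neq 0$, contradicting anisotropy of $N$. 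Hence $m_{x}$ is irreducible over $k$ and $k[x]\cong k[t]/(m_{x}(t))$ is a cubic subfield of $A$ (equivalently, one may quote the classification of subalgebras of an Albert division algebra recalled above, by which a $3$-dimensional subalgebra must be a cubic subfield). As $\phi$ fixes $A^{T}\supseteq k[x]$ pointwise, the theorem follows.

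The step I expect to carry the most weight is the reduction to the semisimple case over a possibly imperfect base field: one must invoke the Borel--Tits theorem in full generality rather than passing through the Jordan decomposition inside $G(k)$, which need not be available when $k$ is imperfect. The supporting weight computation is insensitive to the characteristic, since the $T$-weight multiplicities of $A\otimes_{k}\overline{k}$ are the reductions of those of the Albert algebra over $\mathbb{Z}$ and hence are the same in every characteristic, even though in characteristics $2$ and $3$ the module fails to be semisimple as a $G$-module; this is exactly why the argument works uniformly in the setting of the paper. The remaining ingredients --- that semisimple rational elements lie in maximal $k$-tori, and the basic identities for the generic minimal polynomial of a cubic norm structure --- are standard.
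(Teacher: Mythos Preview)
The paper does not supply a proof here; the result is quoted from \cite{Th-2} (Theorem~4.1) and used as input to the later arguments, so there is no in-paper argument to compare against.

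Your proof is correct and self-contained. One refinement is worth making explicit: the passage ``$G$ anisotropic $\Rightarrow$ $\phi$ semisimple'' is not literally the Borel--Tits statement over an imperfect field, since that theorem only rules out nontrivial \emph{unipotent} $k$-points and Jordan decomposition need not be $k$-rational. The standard fix is to base-change to the perfect closure $k^{\mathrm{perf}}$, where Jordan decomposition is rational; a non-semisimple $\phi$ would then produce a nontrivial unipotent in $G(k^{\mathrm{perf}})$, forcing $G_{k^{\mathrm{perf}}}$ to be isotropic, and one concludes because the relative rank of a reductive group is unchanged under purely inseparable extension (\'etale cohomology, and hence the Tits index, is insensitive to the universal homeomorphism $\mathrm{Spec}\,k^{\mathrm{perf}}\to\mathrm{Spec}\,k$). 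With that in place, the remaining steps --- that a semisimple $k$-point lies in a maximal $k$-torus via Grothendieck's theorem applied to the connected centralizer $Z_G(\phi)^{\circ}$, the characteristic-free weight count giving $\dim_k A^{T}=3$, and the identification of the $3$-dimensional fixed subalgebra $A^{T}$ with a cubic subfield via anisotropy of $N$ --- go through exactly as you describe.
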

\noindent
\vskip1mm
\noindent
{\bf Extending automorphisms :} Let $A$ be an Albert division algebra and $S\subset A$ a subalgebra. We can extend automorphisms of $S$ to automorphisms of $A$ in some cases (e.g. when $S$ is nine dimensional, see \cite{P-S-T1}, Thm. 3.1, \cite{P}, Thm. 5.2). For the purpose of proving $R$-triviality results that we are after, we need certain explicit extensions, we proceed to describe these (cf. \cite{KMRT}, 39.B). 

First, let $S$ be a $9$-dimensional division subalgebra of $A$. Then we may assume $S=(B,\sigma)_+$ for a degree $3$ central simple algebra with center $K$, 
a quadratic \'{e}tale extension of $k$ (see \cite{PR6}). Then $A\cong J(B,\sigma, u,\mu)$ for suitable parameters.

Any automorphism of $S$ is of the form $x\mapsto pxp^{-1}$ with $p\in\text{Sim}(B,\sigma)$, where $\text{Sim}(B,\sigma)=\{g\in B^{\times}|g\sigma(g)\in k^{\times}\}$ (see Thm. 5.12.1, \cite{J3}) and (\cite{J-4}, Thm. 4). Let $\sigma_u:=\text{Int}(u)\circ\sigma$, where $\text{Int}(u)$ is the conjugation by $u$. We have,
\begin{proposition}[Prop. 3.2, \cite{Th-2}]\label{aut-ext} Let $A=J(B,\sigma, u,\mu)$ be an Albert algebra over $k$ with $S:=(B,\sigma)_+$ a division algebra and $K=Z(B)$. Let $\phi\in\text{Aut}(S)$ be given by $\phi(x)=gxg^{-1}$ for 
$g\in\text{Sim}(B,\sigma)$ with $g\sigma(g)=\lambda\in k^*$ and $\nu:=N_B(g)\in K^*$. Let $q\in U(B,\sigma_u)$ be arbitrary with $N_B(q)=\bar{\nu}^{-1}\nu$. Then the map $\widetilde{\phi}:A\rightarrow A$, given by 
$$(a,b)\mapsto (gag^{-1},\lambda^{-1}\sigma(g)^{\#}bq),$$ 
is an automorphism of $A$ extending $\phi$.  
\end{proposition}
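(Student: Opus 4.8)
The plan is to show directly that $\widetilde{\phi}$ is a $k$-linear bijection of $A$ which preserves the cubic norm $N$ exactly and fixes the identity element $1=(1_B,0)$. Since $\text{\bf Aut}(A)$ is the stabiliser of $1$ in $\text{\bf Str}(A)$, and a norm similarity fixing $1$ has multiplier $N(1)=1$, hence is a norm isometry, this will place $\widetilde{\phi}$ in $\text{Aut}(A)$. That $\widetilde{\phi}$ extends $\phi$ is read off the formula: on the first summand $(a,0)\mapsto(gag^{-1},0)=(\phi(a),0)$, and here $\mathrm{Int}(g)$ does stabilise $(B,\sigma)_+$ because $g\sigma(g)=\lambda\in k^{\times}$ forces $\sigma(g)=\lambda g^{-1}$, whence $\sigma(gag^{-1})=gag^{-1}$ for $a=\sigma(a)$. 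Bijectivity is immediate, since $g$, $\sigma(g)^{\#}$ and $q$ are units, so each coordinate of $\widetilde{\phi}$ is a composite of invertible one-sided multiplications (and a conjugation on the first coordinate); $k$-linearity is clear.

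Before computing the norm I would record the elementary facts that carry the argument. Since $\sigma$ is of the second kind, $N_B\circ\sigma=\overline{N_B}$, so $N_B(\sigma(g))=\bar\nu$ with $\nu=N_B(g)$; the reduced trace $T_B$ and reduced norm $N_B$ are invariant under conjugation by units; $N_B(x^{\#})=N_B(x)^2$ and $xx^{\#}=x^{\#}x=N_B(x)$ for $x\in B$; and $\sigma(\sigma(g)^{\#})=g^{\#}$. Applying $N_B$ to $g\sigma(g)=\lambda\cdot 1_B$ gives the pivotal scalar identity $\lambda^{3}=\nu\bar\nu=N_{K/k}(\nu)$, equivalently $\lambda^{-3}=(\nu\bar\nu)^{-1}$. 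From $g^{\#}=\nu g^{-1}$ and $\sigma(g)^{\#}=\bar\nu\,\sigma(g)^{-1}=\bar\nu\lambda^{-1}g$ one gets $g^{-1}\sigma(g)^{\#}=\bar\nu\lambda^{-1}$, a central scalar. Finally, $q\in U(B,\sigma_u)$ unwinds to $q\,u\,\sigma(q)=u$.

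Now I would verify $N(\widetilde{\phi}(a,b))=N((a,b))$ term by term, using $N((a,b))=N_B(a)+T_K(\mu N_B(b))-T_B(abu\sigma(b))$. Write $a'=gag^{-1}$ and $b'=\lambda^{-1}\sigma(g)^{\#}bq$, so $\sigma(b')=\lambda^{-1}\sigma(q)\sigma(b)g^{\#}$. The first term is immediate: $N_B(a')=N_B(a)$ by multiplicativity. For the second, multiplicativity of $N_B$ together with $N_B(\sigma(g)^{\#})=\bar\nu^{2}$, $N_B(q)=\bar\nu^{-1}\nu$ and $\lambda^{-3}=(\nu\bar\nu)^{-1}$ collapse $N_B(b')$ to $N_B(b)$, giving $T_K(\mu N_B(b'))=T_K(\mu N_B(b))$. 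The third term is the crux: substituting into $T_B(a'b'u\sigma(b'))$, using $q\,u\,\sigma(q)=u$ to absorb the $q$'s and $g^{-1}\sigma(g)^{\#}=\bar\nu\lambda^{-1}$, $g^{\#}=\nu g^{-1}$ to pull out scalars, leaves $\lambda^{-3}\nu\bar\nu\cdot T_B\big(g(abu\sigma(b))g^{-1}\big)$, which by $\lambda^{3}=\nu\bar\nu$ and conjugation-invariance of $T_B$ equals $T_B(abu\sigma(b))$. Hence $N\circ\widetilde{\phi}=N$ and we are done.

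The only real difficulty is the bookkeeping in the third term: one must arrange that every non-central factor of $a'b'u\sigma(b')$ is annihilated, either by pairing with its $\sigma$-image through $q\,u\,\sigma(q)=u$, or by being conjugated away inside $T_B$; the catalyst that renders the surviving scalar trivial is exactly the identity $\lambda^{3}=N_{K/k}(\nu)$ coming from $g\sigma(g)=\lambda$. It is worth noting that the admissibility relation $N_B(u)=\mu\bar\mu$ plays no role in this verification beyond guaranteeing that $u$ is invertible; the computation is driven entirely by the relations on $g$ and $q$.
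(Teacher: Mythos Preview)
Your proof is correct. The proposition is quoted in this paper as a result from \cite{Th-2} and is not proved here, so there is no in-paper argument to compare against; your direct verification that $\widetilde{\phi}$ is a $k$-linear bijection preserving $N$ and fixing $(1_B,0)$, combined with the fact (stated in the paper) that $\text{\bf Aut}(A)$ is the stabiliser of $1$ in $\text{\bf Str}(A)$, is exactly the intended elementary route. The key identities you isolate---$\lambda^3=\nu\bar\nu$, $g^{-1}\sigma(g)^{\#}=\bar\nu\lambda^{-1}$, $g^{\#}=\nu g^{-1}$, and $qu\sigma(q)=u$---are precisely what is needed, and your handling of the cross term $T_B(a'b'u\sigma(b'))$ is clean: after pulling out the central scalars and using $qu\sigma(q)=u$, the expression becomes $\lambda^{-3}\nu\bar\nu\cdot g(abu\sigma(b))g^{-1}$, and conjugation-invariance of $T_B$ finishes it.

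One minor remark on exposition: the sentence ``a norm similarity fixing $1$ has multiplier $N(1)=1$'' is superfluous in your argument, since you establish $N\circ\widetilde{\phi}=N$ directly (so the multiplier is already $1$ before you invoke the stabiliser description). It is not wrong, just redundant.
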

\begin{corollary}[Cor. 3.1, \cite{Th-2}]\label{aut-ext-D} Let $A=J(D,\mu)$ be an Albert algebra arising from the first Tits construction, where $D$ is a degree $3$ central division algebra. Let $\phi\in\text{Aut}(D_+)$ be given by $\phi(x)=gxg^{-1}$ for $g\in D^{\times}$ and $x\in D_+$. Then the map
  $$(x,y,z)\mapsto (gxg^{-1}, gyh^{-1}, hzg^{-1}),$$
  for any $h\in D^{\times}$ with $N_D(g)=N_D(h)$, is an automorphism of $A$ extending $\phi$.
  \end{corollary}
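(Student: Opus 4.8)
\noindent
The plan is to verify directly that the displayed map is an automorphism, using the characterization of automorphisms of the Jordan algebra attached to a cubic norm structure $(N,\#,c)$: a $k$-linear bijection of $A$ is an algebra automorphism precisely when it fixes the base point $c=1$ and preserves the cubic form $N$. (This is the identification, recalled in Section~2, of $\text{\bf Aut}(A)$ with the stabilizer of $1$ in $\text{\bf Str}(A)$, together with the observation that a norm similarity fixing $1$ has multiplier $1$.) So, writing $\psi$ for the map $(x,y,z)\mapsto(gxg^{-1},gyh^{-1},hzg^{-1})$, I would reduce the statement to three checks: that $\psi$ is a $k$-linear bijection, that $\psi(1)=1$, and that $N\circ\psi=N$.

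For the first two: $k$-linearity is clear; bijectivity follows from $g,h\in D^{\times}$ (with inverse $(x,y,z)\mapsto(g^{-1}xg,g^{-1}yh,h^{-1}zg)$); and $\psi((1,0,0))=(g\,1\,g^{-1},0,0)=(1,0,0)$. The heart of the argument is the norm computation. Using
$$N((x,y,z))=N_D(x)+\mu N_D(y)+\mu^{-1}N_D(z)-T_D(xyz),$$
I would observe that multiplicativity of $N_D$ together with its $k$-valuedness give $N_D(gxg^{-1})=N_D(x)$, $N_D(gyh^{-1})=N_D(g)N_D(y)N_D(h)^{-1}=N_D(y)$ and $N_D(hzg^{-1})=N_D(z)$, the last two being exactly where the hypothesis $N_D(g)=N_D(h)$ is used; and that the cross term collapses by cancellation,
$$(gxg^{-1})(gyh^{-1})(hzg^{-1})=g(xyz)g^{-1},$$
so that $T_D$ of it equals $T_D(xyz)$ by conjugation-invariance of the reduced trace. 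Summing, $N(\psi((x,y,z)))=N((x,y,z))$, which completes the three checks.

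It then remains to note that $\psi$ extends $\phi$: restricting to the first summand $D_+\hookrightarrow A$, $x\mapsto(x,0,0)$, one has $\psi((x,0,0))=(gxg^{-1},0,0)$, so $\psi|_{D_+}=\phi$. Alternatively, one could obtain the corollary from Proposition~\ref{aut-ext} by specializing to the split case $K=k\times k$, $B=D\times D^{\circ}$ with $\sigma$ the switch involution and matching the two Tits-process formulas, but the direct check is shorter. I do not anticipate a real obstacle here: the norm computation is a one-line cancellation, and the only subtlety is invoking the structural fact that a norm-preserving linear bijection fixing $1$ is a Jordan algebra automorphism --- i.e.\ the description of $\text{\bf Aut}(A)$ inside $\text{\bf Str}(A)$ recalled in Section~2.
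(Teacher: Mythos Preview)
Your direct verification is correct: $k$-linearity and bijectivity are immediate, $\psi(1)=1$ is clear, and the norm computation goes through exactly as you wrote, using multiplicativity of $N_D$, the hypothesis $N_D(g)=N_D(h)$, and conjugation-invariance of $T_D$. The structural fact you invoke --- that the stabilizer of $1$ in $\text{\bf Str}(A)$ is $\text{\bf Aut}(A)$ --- is precisely what is recalled in Section~2, so there is no gap.

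The paper itself gives no proof here; the statement is imported from \cite{Th-2} and placed immediately after Proposition~\ref{aut-ext} as its corollary, so the intended argument is the specialization you mention at the end: take $K=k\times k$, $B=D\times D^{\circ}$, $\sigma$ the switch, and unwind the second-construction formula $(a,b)\mapsto(gag^{-1},\lambda^{-1}\sigma(g)^{\#}bq)$ into first-construction coordinates. Your direct computation is shorter and avoids having to track the identification $J(B,\sigma,u,\mu)\cong J(D,\lambda)$ and the translation of $(\lambda,q)$ into the parameter $h$; the specialization route, on the other hand, makes explicit that this is the same phenomenon as Proposition~\ref{aut-ext} and explains where the condition $N_D(g)=N_D(h)$ comes from (it is the split form of $N_B(q)=\bar\nu^{-1}\nu$). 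Either approach is fine.
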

\noindent
We have,
\begin{proposition}[Prop. 3.3, \cite{Th-2}]\label{aut-A-Bsigma} Let $A=J(B,\sigma, u, \mu)$ be an Albert algebra with $S:=(B,\sigma)_+$ a division algebra and $K=Z(B)$, a quadratic \'{e}tale extension of $k$. Then any automorphism of $A$ stabilizing $S$ is of the form $(a,b)\mapsto (pap^{-1}, pbq)$ for $p\in U(B,\sigma)$ and $q\in U(B,\sigma_u)$ with 
$N_B(p)N_B(q)=1$. We have 
$$\text{Aut}(A, (B,\sigma)_+)\cong [U(B,\sigma)\times U(B,\sigma_u)]^{det}/K^{(1)},$$
where
$$[U(B,\sigma)\times U(B,\sigma_u)]^{det}:=\{(p,q)\in U(B,\sigma)\times U(B,\sigma_u)| N_B(p)=N_B(q)\},$$
and $K^{(1)}$ denotes the group of norm $1$ elements in $K$, embedded diagonally.
\end{proposition}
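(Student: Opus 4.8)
The plan is to reduce the statement to two structural inputs already at hand: Proposition~\ref{rational}, which --- together with the concrete description of the action (cf.\ \cite{KMRT}, 39.B) --- identifies $\text{Aut}(A/S)$ with the group of maps $(a,b)\mapsto(a,bw)$, $w\in SU(B,\sigma_u)$; and Proposition~\ref{aut-ext}, which provides an explicit extension to $A$ of an arbitrary automorphism of $S=(B,\sigma)_+$.

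First I would take $\Phi\in\text{Aut}(A,S)$ and restrict it to $S$. The restriction is an automorphism of $S$, hence of the form $\text{Int}(g)$ for some $g\in\text{Sim}(B,\sigma)$ (Thm.~5.12.1 of \cite{J3}); write $g\sigma(g)=\lambda\in k^\times$ and $\nu:=N_B(g)$, and note $N_{K/k}(\nu)=N_B(g\sigma(g))=\lambda^3$. By Proposition~\ref{aut-ext}, $\text{Int}(g)$ extends to the automorphism $\widetilde\phi\colon(a,b)\mapsto(gag^{-1},\lambda^{-1}\sigma(g)^{\#}bq_0)$ of $A$ for a suitable $q_0\in U(B,\sigma_u)$. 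Then $\Phi\circ\widetilde\phi^{-1}$ fixes $S$ pointwise, so $\Phi\circ\widetilde\phi^{-1}\colon(a,b)\mapsto(a,bw)$ for some $w\in SU(B,\sigma_u)$, whence $\Phi\colon(a,b)\mapsto(gag^{-1},\lambda^{-1}\sigma(g)^{\#}bq)$ with $q:=q_0w\in U(B,\sigma_u)$. The next step is to simplify the left factor of the second component: from $\sigma(g)g=g\sigma(g)=\lambda$ one gets $\sigma(g)^{-1}=\lambda^{-1}g$, so $\sigma(g)^{\#}=N_B(\sigma(g))\sigma(g)^{-1}=\lambda^{-1}\bar\nu\,g$ and hence $p:=\lambda^{-1}\sigma(g)^{\#}=\lambda^{-2}\bar\nu\,g$ is a $K^\times$-multiple of $g$. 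Therefore $\text{Int}(p)=\text{Int}(g)=\Phi|_S$, while $p\sigma(p)=\lambda^{-3}N_{K/k}(\nu)=1$, i.e.\ $p\in U(B,\sigma)$; so $\Phi\colon(a,b)\mapsto(pap^{-1},pbq)$ with $(p,q)\in U(B,\sigma)\times U(B,\sigma_u)$. Finally, applying $N(\Phi(0,x))=N(0,x)$ for $x\in B$ gives $T_K\bigl(\mu N_B(p)N_B(q)N_B(x)\bigr)=T_K\bigl(\mu N_B(x)\bigr)$; since the reduced norm $N_B\colon B^\times\to K^\times$ is surjective and $T_K$ nondegenerate, we conclude $N_B(p)N_B(q)=1$.

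For the converse, given $(p,q)\in U(B,\sigma)\times U(B,\sigma_u)$ with $N_B(p)N_B(q)=1$, put $\nu:=N_B(p)$ (so $\nu\bar\nu=1$). Proposition~\ref{aut-ext} with $g=p$, $\lambda=1$ produces the automorphism $(a,b)\mapsto(pap^{-1},\bar\nu\,pbq_0)$ with $q_0\in U(B,\sigma_u)$, $N_B(q_0)=\bar\nu^{-1}\nu$; a direct check shows $\bar\nu q_0$ and $q$ both lie in $U(B,\sigma_u)$ and have reduced norm $\nu^{-1}$, so $w:=(\bar\nu q_0)^{-1}q\in SU(B,\sigma_u)$, and post-composing with $(a,b)\mapsto(a,bw)$ yields the automorphism $(a,b)\mapsto(pap^{-1},pbq)$ of $A$, which visibly stabilizes $S$. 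Thus $\Theta\colon(p,q)\mapsto\bigl[(a,b)\mapsto(pap^{-1},pbq)\bigr]$ maps $[U(B,\sigma)\times U(B,\sigma_u)]^{det}$ onto $\text{Aut}(A,(B,\sigma)_+)$; one checks $\Theta(p,q)\Theta(p',q')=\Theta(pp',q'q)$, and $\Theta(p,q)=\text{id}$ forces $p$ to centralize $S$, hence $p\in Z(B)=K$ (as $S\otimes_kK=B$), so $p\in K^{(1)}$ and $q=p^{-1}$. Therefore $\ker\Theta=K^{(1)}$ (embedded via $c\mapsto(c,\bar c)$), and $\text{Aut}(A,(B,\sigma)_+)\cong[U(B,\sigma)\times U(B,\sigma_u)]^{det}/K^{(1)}$.

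The step I expect to be most delicate is the reduction of the a priori similitude $g\in\text{Sim}(B,\sigma)$ to a genuine unitary element $p\in U(B,\sigma)$: although $\Phi|_S$ need not be conjugation by a unitary element, the explicit extension formula of Proposition~\ref{aut-ext} together with the identity $N_{K/k}(N_B(g))=\lambda^3$ forces the scalar $\lambda^{-2}\bar\nu$ to rescale $g$ into $U(B,\sigma)$. Getting all the reduced-norm normalizations to match up consistently in both directions is where the care lies; the remaining verifications (that the displayed maps are Jordan automorphisms, and that the composition law and kernel are as claimed) are formal once the extension data of Propositions~\ref{rational} and~\ref{aut-ext} are in place.
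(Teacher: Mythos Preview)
The paper itself does not prove this proposition; it is quoted verbatim from \cite{Th-2} (Prop.~3.3 there), so there is no ``paper's own proof'' to compare against. Your overall strategy---restrict to $S$, extend back via Proposition~\ref{aut-ext}, identify the discrepancy as an element of $\text{Aut}(A/S)\cong SU(B,\sigma_u)$, and then rescale the similitude $g$ by the central factor $\lambda^{-2}\bar\nu$ to land in $U(B,\sigma)$---is correct and is essentially the argument one finds in \cite{Th-2} and \cite{KMRT}, \S39.B. The rescaling computation $p:=\lambda^{-2}\bar\nu g\in U(B,\sigma)$, the converse construction, and the kernel computation are all fine.

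There is one genuine gap. In deducing $N_B(p)N_B(q)=1$ you assert that the reduced norm $N_B\colon B^\times\to K^\times$ is surjective; this is false in general for a degree~$3$ division algebra $B$ (over a number field, for instance, the image is governed by local conditions and is typically a proper subgroup). Fortunately you do not need this: you already know $N_B(q)=N_B(q_0w)=\bar\nu^{-1}\nu$ from Proposition~\ref{aut-ext} and $w\in SU(B,\sigma_u)$, and a direct computation gives $N_B(p)=N_B(\lambda^{-2}\bar\nu g)=\lambda^{-6}\bar\nu^{3}\nu$, whence $N_B(p)N_B(q)=\lambda^{-6}\bar\nu^{2}\nu^{2}=\lambda^{-6}(\nu\bar\nu)^{2}=\lambda^{-6}\lambda^{6}=1$ using $\nu\bar\nu=N_B(g\sigma(g))=\lambda^3$. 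Replace your norm-form argument with this two-line computation and the proof is complete. (As a minor bookkeeping remark, note that the displayed isomorphism in the statement uses the condition $N_B(p)=N_B(q)$ rather than $N_B(p)N_B(q)=1$; the passage between the two is the reparametrization $q\leftrightarrow q^{-1}$, under which the diagonal embedding $c\mapsto(c,\bar c)=(c,c^{-1})$ of $K^{(1)}$ matches your kernel.)
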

\section{\bf Norm similarities and $R$-triviality}
In the last section, we discussed a recipe to extend automorphisms of a $9$-dimensional subalgebra of an Albert division algebra to an automorphism of the Albert algebra. In this section, we analyse norm similarities of an Albert algebra $A$ in the same spirit, discuss rationality and $R$-triviality properties of some subgroups of {\bf Str}$(A)$. 

Since any norm similarity of $A$ fixing the identity element $1$ of $A$ is necessarily an automorphism of $A$, it follows that for any $k$-subalgebra $S\subset A$ the subgroup {\bf Str}$(A/S)$ of all norm similarities that fix $S$ pointwise, is equal to the subgroup {\bf Aut}$(A/S)$ of {\bf Aut}$(A)$, i.e., {\bf Str}$(A/S)=${\bf Aut}$(A/S)$.

The (normal) subgroup of $\text{Str}(A)$ generated by the $U$-operators of $A$ is denoted by $\text{Instr}(A)$. We will denote by $\text{\bf Str}(A,S)$ the full subgroup of $\text{\bf Str}(A)$ consisting of all norm similarities of $A$ that leave $S$ invariant and $\text{Str}(A,S)=\text{\bf Str}(A,S)(k)$.

\noindent
{\bf Extending norm similarities :} Let $A$ be an Albert (division) algebra and $S$ a $9$-dimensional subalgebra of $A$. 
Given an element $\psi\in\text{Str}(S)$, we wish to extend it to an element of $\text{Str}(A)$. This may be achieved by invoking a result of Garibaldi-Petersson (\cite{GP}, Prop. 7.2.4), however, we need certain explicit extension for the purpose of proving $R$-triviality of the algebraic groups described in the introduction.
\begin{theorem}[Thm. 4.4, \cite{Th-2}]\label{ext-norm-sim} Let $A$ be an Albert division algebra over $k$ and $S\subset A$ be a $9$-dimensional subalgebra. Then every element $\psi\in\text{Str}(S)$ admits an extension $\widetilde{\psi}\in\text{Str}(A)$. Let 
$S=(B,\sigma)_+$ and $A=J(B,\sigma,u,\mu)$ for suitable parameters. Let $\psi\in\text{Str}(S)$ be given by $b\mapsto\gamma gb\sigma(g)$ for $b\in (B,\sigma)_+,~g\in B^{\times}$ and $\gamma\in k^{\times}$. Then for any $q\in U(B,\sigma_u)$ with $N_B(q)=N_B(\sigma(g)^{-1}g)$, the map $\widetilde{\psi}$ given by 
$$\widetilde{\psi}((b,x))=\gamma(gb\sigma(g),\sigma(g)^{\#}xq),$$
is a norm similarity of $A$ extending $\psi$. 
\end{theorem}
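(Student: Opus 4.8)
The statement splits into two parts: the bare existence of an extension for an arbitrary $\psi\in\text{Str}(S)$, and the verification that the displayed formula provides one when $S=(B,\sigma)_+$ and $\psi$ has the form $b\mapsto\gamma gb\sigma(g)$. For the first part I would simply invoke the Garibaldi--Petersson result $(\cite{GP}$, Prop.~7.2.4$)$ already alluded to above. (Alternatively one can reduce to the second part: for $S=(B,\sigma)_+$ the maps $b\mapsto\gamma gb\sigma(g)$ with $\gamma\in k^{\times}$, $g\in B^{\times}$ do land in $\text{Str}(S)$ — they preserve $S$ since $\sigma(gb\sigma(g))=gb\sigma(g)$ whenever $\sigma(b)=b$ — and they form a subgroup; together with the automorphisms of $S$, which extend to $A$ by Proposition~\ref{aut-ext} and by $(\cite{P-S-T1},\cite{P})$, these generate $\text{Str}(S)$.) One should also record that a $q$ as required exists: $N_B(\sigma(g)^{-1}g)=N_B(g)/\overline{N_B(g)}$ has $N_{K/k}$-norm $1$, hence lies in $K^{(1)}$, and $N_B$ maps $U(B,\sigma_u)$ onto $K^{(1)}$ — the same input already underlying Proposition~\ref{aut-ext}.

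The core of the proof is the computation that $\widetilde\psi$ is a norm similarity of $A=J(B,\sigma,u,\mu)$, where $N((b,x))=N_B(b)+T_K(\mu N_B(x))-T_B(bxu\sigma(x))$. Write $\widetilde\psi((b,x))=(b',x')$ with $b'=\gamma gb\sigma(g)$ and $x'=\gamma\sigma(g)^{\#}xq$, and put $n:=N_B(g)\in K^{\times}$, so $N_{K/k}(n)=n\bar n\in k^{\times}$ and $N_B(\sigma(g))=\bar n$. Multiplicativity of $N_B$ gives $N_B(b')=\gamma^{3}n\bar n\,N_B(b)$; and $N_B(x')=\gamma^{3}\bar n^{2}N_B(x)N_B(q)=\gamma^{3}n\bar n\,N_B(x)$ by the choice $N_B(q)=n/\bar n$, so $T_K(\mu N_B(x'))=\gamma^{3}n\bar n\,T_K(\mu N_B(x))$ since $\gamma^{3}n\bar n\in k$. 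For the cross term, using $\sigma(\sigma(g)^{\#})=g^{\#}$ one finds $\sigma(x')=\gamma\sigma(q)\sigma(x)g^{\#}$, whence
\[
b'x'u\sigma(x')=\gamma^{3}\,g\,b\,\bigl(\sigma(g)\sigma(g)^{\#}\bigr)\,x\,\bigl(q\,u\,\sigma(q)\bigr)\,\sigma(x)\,g^{\#}.
\]
Here $\sigma(g)\sigma(g)^{\#}=N_B(\sigma(g))=\bar n$ (central in $B$), and $q\,u\,\sigma(q)=u$ because $q\,\sigma_u(q)=1$, i.e. $q\,u\,\sigma(q)\,u^{-1}=1$. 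Pulling out the central scalar $\bar n$ and using the conjugation invariance of $T_B$ together with $g^{\#}g=N_B(g)=n$ yields $T_B(b'x'u\sigma(x'))=\gamma^{3}n\bar n\,T_B(bxu\sigma(x))$. Adding the three contributions,
\[
N(\widetilde\psi((b,x)))=\gamma^{3}N_{K/k}(N_B(g))\,N((b,x)),
\]
so $\widetilde\psi$ is a norm similarity with multiplier $\gamma^{3}N_{K/k}(N_B(g))\in k^{\times}$.

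It remains to note that $\widetilde\psi$ is $k$-linear and bijective, since $g$, $\sigma(g)^{\#}$ and $q$ are all invertible in $B$; and that it extends $\psi$, since on the first summand $\widetilde\psi((b,0))=(\gamma gb\sigma(g),0)=(\psi(b),0)$. Together with the first paragraph this gives the theorem.

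The only genuinely non-formal ingredient is the one flagged above — that $N_B\colon U(B,\sigma_u)\to K^{(1)}$ is onto, so that a $q$ with the prescribed reduced norm exists; everything else is bookkeeping with three standard identities in the degree-$3$ central simple algebra $B$, namely $aa^{\#}=a^{\#}a=N_B(a)$, $\sigma(a^{\#})=\sigma(a)^{\#}$, and $K$-linearity together with conjugation invariance of $T_B$, plus the defining relation of $U(B,\sigma_u)$ rewritten as $q\,u\,\sigma(q)=u$. I expect the cross-term manipulation to be where an error is most likely to creep in, so I would carry it out exactly as displayed, isolating the two central scalars $\sigma(g)\sigma(g)^{\#}$ and $q\,u\,\sigma(q)$ before taking reduced traces.
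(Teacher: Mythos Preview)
The paper does not supply its own proof of this statement; it is quoted verbatim from \cite{Th-2} (Thm.~4.4) and used as a black box, with only the remark preceding it that bare existence could alternatively be obtained from \cite{GP}, Prop.~7.2.4. So there is nothing in the present paper to compare your argument against.

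That said, your direct verification is correct and is exactly the sort of computation one expects: you check each of the three summands of $N((b,x))=N_B(b)+T_K(\mu N_B(x))-T_B(bxu\sigma(x))$ scales by $\gamma^{3}n\bar n$, using only $aa^{\#}=N_B(a)$, $\sigma(a^{\#})=\sigma(a)^{\#}$, the defining relation $qu\sigma(q)=u$ of $U(B,\sigma_u)$, and cyclic invariance of $T_B$. Your handling of the cross term is the right one, and your flag that the only non-formal input is the surjectivity of $N_B\colon U(B,\sigma_u)\to K^{(1)}$ (already implicitly used in Proposition~\ref{aut-ext}) is well placed.
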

\begin{theorem}[Thm. 4.5, \cite{Th-2}]\label{Str-ABsigma+}
Let $A=J(B,\sigma,u,\mu)$ be an Albert division algebra, written as a second Tits construction and $S=(B,\sigma)_+$. The group $\text{Str}(A,S)$ consists of the maps 
$(b,x)\mapsto\gamma(gb\sigma(b),\sigma(g)^{\#}xq)$ where $\gamma\in k^{\times}, g\in B^{\times}$ are arbitrary and $q\in U(B,\sigma_u)$ satisfies $N_B(q)=N_B(\sigma(g)^{-1}g)$. 
We have,
$$\text{Str}(A,(B,\sigma)_+)\cong \frac{k^{\times}\times H_0}{K^{\times}},$$
where $H_0=\{(g,q)\in B^{\times}\times U(B,\sigma_u)|N_B(q)N_B(\sigma(g)^{-1}g)=1\}$ and $K^{\times}\hookrightarrow k^{\times}\times H_0$ via $\alpha\mapsto (N_K(\alpha)^{-1},\alpha,\overline{\alpha}^{-1}\alpha)$.
\end{theorem}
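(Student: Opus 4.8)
One inclusion is immediate from Theorem~\ref{ext-norm-sim}: each map of the displayed shape is a norm similarity of $A$ whose restriction to $S=(B,\sigma)_+$ is a norm similarity of $S$, so it stabilizes $S$ and lies in $\text{Str}(A,S)$. The substance is the reverse inclusion, and the plan is to peel off the ``$S$-part'' of an arbitrary $f\in\text{Str}(A,S)$ using the explicit extensions of Theorem~\ref{ext-norm-sim}, and then to recognize the remaining factor inside $\text{\bf Aut}(A/S)$, which is already understood via Proposition~\ref{rational}(2).

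Let $f\in\text{Str}(A,S)$. Since $(B,\sigma)_+$ sits in $A$ as the first summand of the second Tits construction, the construction formula for $N$ shows that $N$ restricts on $S$ to $N_B|_{(B,\sigma)_+}$; hence $f|_S\in\text{Str}(S)$. As recalled in the setup of Theorem~\ref{ext-norm-sim} (see \cite{J3}, \cite{J1}), every element of $\text{Str}(S)$ has the form $b\mapsto\gamma\,gb\sigma(g)$ with $g\in B^\times$, $\gamma\in k^\times$; write $\psi:=f|_S$ in this way, and let $\widetilde{\psi}\in\text{Str}(A,S)$ be the extension $(b,x)\mapsto\gamma(gb\sigma(g),\sigma(g)^{\#}xq_1)$ furnished by Theorem~\ref{ext-norm-sim}, where $q_1\in U(B,\sigma_u)$. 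Then $h:=\widetilde{\psi}^{-1}\!\circ f$ is a norm similarity of $A$ fixing $S$ pointwise; in particular it fixes $1$, so (a norm similarity fixing $1$ is an automorphism) $h$ is an automorphism of $A$, and $h\in\text{Aut}(A/S)$. By Proposition~\ref{aut-A-Bsigma}, $h$ has the form $(b,x)\mapsto(pbp^{-1},pxq)$ with $p\in U(B,\sigma)$, $q\in U(B,\sigma_u)$ and $N_B(p)N_B(q)=1$; since $h$ fixes $(B,\sigma)_+$ pointwise, $p$ centralizes $(B,\sigma)_+$, and because $(B,\sigma)_+$ generates $B$ over its center we get $p\in K^\times$, so $h$ is $(b,x)\mapsto(b,xq_0)$ with $q_0:=pq\in U(B,\sigma_u)$, $N_B(q_0)=1$ (compatibly with Proposition~\ref{rational}(2)). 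Hence $f=\widetilde{\psi}\circ h$ sends $(b,x)$ to $\gamma(gb\sigma(g),\sigma(g)^{\#}x\,q_1q_0)$; as $N_B(q_0)=1$, the element $q:=q_1q_0\in U(B,\sigma_u)$ still satisfies the norm relation demanded in Theorem~\ref{ext-norm-sim}, so $f$ is of the asserted form. This establishes the description of $\text{Str}(A,S)$.

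For the isomorphism I would introduce $\Phi\colon k^\times\times H_0\to\text{Str}(A,S)$ sending $(\gamma,g,q)$ to the norm similarity attached to these data as in Theorem~\ref{ext-norm-sim}, after choosing the parametrization (possibly replacing $q$ by $q^{-1}$) so that the norm condition cutting out $H_0$ coincides with the one imposed on the parameter in Theorem~\ref{ext-norm-sim} and so that $\Phi$ becomes a group homomorphism. Then well-definedness of $\Phi$ is Theorem~\ref{ext-norm-sim}, surjectivity is the previous paragraph, and the homomorphism property follows from a direct composition computation using only that $\sigma$ is an anti-automorphism and that $(yz)^{\#}=z^{\#}y^{\#}$ in $B$: under composition the $g$'s multiply in $B^\times$, the $\gamma$'s in $k^\times$, and the $U(B,\sigma_u)$-factors multiply (in the order that the chosen parametrization was designed to produce). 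Finally one computes $\ker\Phi$: from $\Phi(\gamma,g,q)=\mathrm{id}_A$, evaluating the first coordinate at $1_B$ gives $g\sigma(g)=\gamma^{-1}\in k^\times$; substituting back then forces $gbg^{-1}=b$ for all $b\in(B,\sigma)_+$, whence $g\in Z(B)=K^\times$, $\gamma=N_{K/k}(g)^{-1}$; and, using $c^{\#}=c^2$ for central $c\in B$, the second coordinate pins down $q$. One reads off that $\ker\Phi$ is precisely the image of $K^\times$ under $\alpha\mapsto(N_K(\alpha)^{-1},\alpha,\overline{\alpha}^{-1}\alpha)$, giving $\text{Str}(A,(B,\sigma)_+)\cong(k^\times\times H_0)/K^\times$.

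The main obstacle is the reverse-inclusion step, and within it the fact that $\text{Str}(S)$ is no larger than the family $\{b\mapsto\gamma gb\sigma(g)\}$: this is what lets Theorem~\ref{ext-norm-sim} absorb all of the ``$S$-behaviour'' of $f$, so that the residual automorphism $h$ is forced into the rank-two group $\text{\bf Aut}(A/S)$. Everything afterward is routine bookkeeping with the Tits-construction formulas; the delicate point is only keeping track of inverses and of the order of multiplication in the unitary factor, so that $\Phi$ comes out genuinely multiplicative and the two a priori distinct norm conditions on the parameter $q$ are reconciled.
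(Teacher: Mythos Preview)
This statement is quoted in the present paper from \cite{Th-2} without proof, so there is no proof here to compare against. Your argument is the natural one and matches the structure one would expect in \cite{Th-2}: restrict $f$ to $S$, invoke the known description of $\text{Str}((B,\sigma)_+)$ as $\{b\mapsto\gamma\,gb\sigma(g)\}$, cancel with the explicit extension from Theorem~\ref{ext-norm-sim}, and identify the residual factor inside $\text{\bf Aut}(A/S)\cong\text{\bf SU}(B,\sigma_u)$ via Proposition~\ref{rational}(2). The kernel computation is also carried out correctly.

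One small point worth flagging, which you already half-noticed: the norm condition defining $H_0$ in the statement is $N_B(q)N_B(\sigma(g)^{-1}g)=1$, whereas the condition on the parameter $q$ in Theorem~\ref{ext-norm-sim} is $N_B(q)=N_B(\sigma(g)^{-1}g)$. These differ by an inversion of $q$, so the homomorphism $\Phi$ must be set up with $q^{-1}$ in the second slot (or equivalently with the opposite group law on the $U(B,\sigma_u)$-factor). You gesture at this (``possibly replacing $q$ by $q^{-1}$''), but it would be cleaner to commit to one convention and then verify directly that the stated embedding $\alpha\mapsto(N_K(\alpha)^{-1},\alpha,\overline{\alpha}^{-1}\alpha)$ really lands in $H_0$ and really is the kernel under that convention; as written, the check that $(\alpha,\overline{\alpha}^{-1}\alpha)\in H_0$ and that the second coordinate of the kernel element comes out exactly as $\overline{\alpha}^{-1}\alpha$ is sensitive to this choice. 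This is bookkeeping rather than a gap, but since you explicitly call out ``keeping track of inverses'' as the delicate point, it deserves to be nailed down rather than left optional.
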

\begin{corollary}[Cor. 4.3, \cite{Th-2}]\label{str-ext-D} Let $A=J(D,\mu)$ be an Albert algebra arising from the first Tits construction, where $D$ is a degree $3$ central division algebra over $k$. Then the group $\text{Str}(A,D_+)$ consists of the maps
  $$(x,y,z)\mapsto \gamma(axb, b^{\#}yc, c^{-1}za^{\#}),$$
  where $a, b, c \in D^{\times},~\gamma\in k^{\times}$ with $N_D(a)=N_D(b)N_D(c)$. 
  \end{corollary}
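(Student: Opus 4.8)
The plan is to derive this corollary from Theorem \ref{Str-ABsigma+} by specializing the second Tits construction data to the split case $K = k \times k$. Recall from the Preliminaries that when $K = k \times k$, the algebra $B$ becomes $D \times D^{\circ}$ with $\sigma$ the switch involution, and the second Tits construction $J(B,\sigma,u,\mu)$ is identified with the first Tits construction $J(D,\lambda)$ for a suitable $\lambda$; under this identification the subalgebra $(B,\sigma)_+$ corresponds to $D_+$ sitting inside $J(D,\mu)$ via the first summand. So the first step is to write down this identification explicitly: an element of $B = D \times D^{\circ}$ fixed by the switch is a pair $(d, d)$, which we identify with $d \in D$; the unitary group $U(B,\sigma)$ is then $\{(d, (d^{-1})^{\circ}) : d \in D^{\times}\} \cong D^{\times}$ under $(d, (d^{-1})^{\circ}) \mapsto d$, and similarly $U(B,\sigma_u)$ becomes another copy of $D^{\times}$; the norm $N_B$ restricted to such elements becomes $N_D$.

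Next I would trace the action in Theorem \ref{ext-norm-sim} (equivalently Theorem \ref{Str-ABsigma+}) through this dictionary. The map $(b,x) \mapsto \gamma(g b \sigma(g), \sigma(g)^{\#} x q)$ on $(B,\sigma)_+ \oplus B$ must be rewritten in terms of the three $D$-summands of $J(D,\mu) = D \oplus D \oplus D$. The first component $b \in (B,\sigma)_+ \cong D_+$ gets sent to $\gamma\, g b \sigma(g)$; writing $g \in B^{\times}$ as a pair $(a, b_1^{\circ})$ and using that $\sigma$ is the switch, one sees $g b \sigma(g)$ becomes conjugation-like on $D$ — this is where the "$x \mapsto axb$" with a constraint relating $a, b$ will come from, after relabeling. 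The second summand $B \cong D \oplus D^{\circ}$ carries the coordinates $y$ and $z$ of the first Tits construction, and the factor $\sigma(g)^{\#} x q$ has to be split into its action on the $y$-part and the $z$-part, producing the $b^{\#} y c$ and $c^{-1} z a^{\#}$ expressions. The norm condition $N_B(q) = N_B(\sigma(g)^{-1} g)$ should collapse, after the identifications, precisely to $N_D(a) = N_D(b) N_D(c)$.

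The main obstacle I anticipate is purely bookkeeping: getting the three matrix factors $a, b, c$ correctly matched to $g$ and $q$ under the splitting $B = D \times D^{\circ}$, keeping track of where opposite-algebra multiplication reverses the order of products, and verifying that the adjoint/sharp maps behave correctly under the identification (e.g. that $\sigma(g)^{\#}$ in $B$ becomes the pair of sharps in $D$ and $D^{\circ}$, and that sharp in $D^{\circ}$ is sharp in $D$). One must also double-check the normalization of $\lambda$ versus $\mu$ and make sure the base point $(1_B, 0)$ maps to $(1, 0, 0)$, so that the claimed formula genuinely restricts to $\phi$ on $D_+$. Alternatively — and this may be cleaner — one can bypass Theorem \ref{Str-ABsigma+} entirely and verify the corollary directly: check by a short computation with the explicit first Tits construction norm $N((x,y,z)) = N_D(x) + \mu N_D(y) + \mu^{-1} N_D(z) - T_D(xyz)$ that the displayed map multiplies $N$ by $\gamma^3 N_D(a) N_D(b)^{-1}\cdots$ (hence is a similarity precisely under $N_D(a) = N_D(b) N_D(c)$), that it leaves $D_+ = D \oplus 0 \oplus 0$ invariant, and that every element of $\text{Str}(A, D_+)$ has this shape because its restriction to $D_+$ lies in $\text{Str}(D_+)$, whose elements are known to be $x \mapsto \gamma a x b$ with $\gamma \in k^\times$, $a,b \in D^\times$, and the extension is then pinned down up to the stated freedom in $c$. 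I would present the first (deduction) route as the main line and remark that the direct verification is routine.
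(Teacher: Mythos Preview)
Your proposal is correct and matches the paper's treatment: the statement is recorded here as a corollary of Theorem~\ref{Str-ABsigma+} (both cited from \cite{Th-2} without a proof reproduced), and your plan to specialize that theorem to the split \'{e}tale case $K=k\times k$, $B=D\times D^{\circ}$, $\sigma=\epsilon$ the switch involution, is exactly the intended derivation. Your alternative direct verification via the explicit norm on $J(D,\mu)$ together with the known description of $\text{Str}(D_+)$ is equally valid and, as you note, avoids the bookkeeping with $D^{\circ}$.
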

\section{\bf $R$-triviality results} In this section, we prove results on $R$-triviality of various groups and also prove our main theorem. We need a few results from (\cite{Th-2}) which we recall below. We also remark at the outset that, since Albert algebras arising from the first Tits construction are either split or are division algebras and the automorphism group of the split Albert algebra is the split group of type $F_4$, hence is $R$-trivial, we may focus on proving $R$-triviality for automorphism groups of Albert \emph{division} algebras arising from the first Tits construction. 
\begin{theorem}[Thm. 5.1, \cite{Th-2}]\label{Str-R-triv} Let $A$ be an Albert (division) algebra. Let $S\subset A$ be a $9$-dimensional subalgebra. Then, with the notations as above, $\text{\bf Str}(A,S)$ is $R$-trivial. 
\end{theorem}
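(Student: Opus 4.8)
The plan is to show that every element of $\text{\bf Str}(A,S)(L)$, for every field extension $L/k$, can be joined to the identity by a chain of rational curves. Since the parametrisations in Corollary \ref{str-ext-D} and Theorem \ref{Str-ABsigma+} are uniform in the base field, it suffices to run the argument over $k$ itself; and we may assume $A$ is an Albert division algebra, the reduced case being similar but easier, as the relevant groups then acquire isotropic almost-simple factors. In the division case $S$ is either of the form $D_+$, with $D$ a degree $3$ central division algebra over $k$, or of the form $(B,\sigma)_+$, with $B$ a degree $3$ central division algebra over a quadratic field extension $K/k$ and $\sigma$ a unitary involution; I would treat the two cases separately, writing $A$ in each as the corresponding Tits construction over $S$.

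Suppose first $S=D_+$, so $A\cong J(D,\mu)$ and, by Corollary \ref{str-ext-D}, a general element of $\text{Str}(A,D_+)$ is $\theta_{\gamma,a,b,c}\colon(x,y,z)\mapsto\gamma(axb,b^{\#}yc,c^{-1}za^{\#})$ with $a,b,c\in D^{\times}$, $\gamma\in k^{\times}$ and $N_D(a)=N_D(b)N_D(c)$. I would first drive $c$ to $1$ along $s\mapsto\theta_{\gamma,\,ac^{-1}c_s,\,b,\,c_s}$, where $c_s$ is the affine line in $D$ with $c_0=c$, $c_1=1$: multiplicativity of the reduced norm keeps this curve inside the hypersurface $N_D(a)=N_D(b)N_D(c)$, and it is regular at $0$ and $1$. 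Repeating with $b$ brings $\theta_{\gamma,a,b,c}$ to $\theta_{\gamma,a'',1,1}$ with $a'':=ac^{-1}b^{-1}\in\text{SL}(1,D)$, and a line in $\mathbb{G}_m$ then removes $\gamma$. What remains is the image of $a''$ under the $k$-homomorphism $\text{\bf SL}(1,D)\to\text{\bf Str}(A,D_+)$, $g\mapsto[(x,y,z)\mapsto(gx,y,zg^{\#})]$, so the case reduces to the $R$-triviality of $\text{\bf SL}(1,D)$ for $\deg D=3$. For this I would observe that, for $p,q\in D^{\times}$, the curve $s\mapsto(sp+(1-s))\,q\,(sp+(1-s))^{-1}q^{-1}$ lies in $\text{SL}(1,D)$ and joins $1$ to the commutator $[p,q]$; hence $[D^{\times},D^{\times}]\subseteq R\,\text{SL}(1,D)(k)$ and $\text{SL}(1,D)(k)/R$ is a quotient of the reduced Whitehead group $SK_1(D)$, which vanishes by Wang's theorem since $\deg D$ is squarefree. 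The same holds over every extension, so $\text{\bf SL}(1,D)$ is $R$-trivial.

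Suppose now $S=(B,\sigma)_+$, so $A\cong J(B,\sigma,u,\mu)$ and, by Theorem \ref{Str-ABsigma+}, a general element of $\text{Str}(A,S)$ is $(b,x)\mapsto\gamma(gb\sigma(g),\sigma(g)^{\#}xq)$ with $\gamma\in k^{\times}$, $g\in B^{\times}$, $q\in U(B,\sigma_u)$ and $N_B(q)=N_B(\sigma(g)^{-1}g)$. A line in $\mathbb{G}_m$ removes $\gamma$, and the plan is then to drive $g$ to $1$; once $g=1$, the constraint forces $q\in\text{SU}(B,\sigma_u)$, and that group is rational by Proposition \ref{rational}, so the argument closes. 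Driving $g$ to $1$ is the delicate step: in contrast with the previous case, the factor compensating a motion of $g$ must be produced inside $U(B,\sigma_u)$ through the reduced-norm map $\text{\bf U}(B,\sigma_u)\to K^{(1)}$, so multiplicativity does not directly help. I would handle it by first moving $q$, at fixed $g$, within its fibre $\{q'\in U(B,\sigma_u):N_B(q')=N_B(q)\}$ — an $\text{\bf SU}(B,\sigma_u)$-torsor with a rational point, hence $R$-trivial by Proposition \ref{rational} — to a point $q_0(g)$ that depends rationally on $g$; the existence of such a choice amounts to a rational section of the reduced-norm map, which I expect to obtain from a generic-fibre argument using surjectivity of the reduced norm. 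Along this section $(g,q_0(g))$ ranges over a variety birational to an open subset of $\text{\bf GL}(1,B)$, hence rational, so it can be joined to $(1,1)$, leaving a residual element of $\text{\bf SU}(B,\sigma_u)$ which is killed as above.

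Conceptually, both computations exhibit $\text{\bf Str}(A,S)$ as a central quotient $\mathbf G/\mathbf Z$ in which $\mathbf Z$ is $\mathbb{G}_m^2$ or $\mathrm{R}_{K/k}\mathbb{G}_m$ — in either case a special torus, so by Hilbert $90$ the quotient has Zariski-local sections and $R$-triviality of $\mathbf G/\mathbf Z$ is equivalent to that of $\mathbf G$ — and in which $\mathbf G$ is assembled from $\mathbb{G}_m$, copies of $\text{\bf GL}(1,\cdot)$ (the norm-one torsors occurring being trivial, via the explicit section $(b,c)\mapsto bc$ in the first case), and a single factor $\text{\bf SL}(1,D)$, respectively $\text{\bf SU}(B,\sigma_u)$. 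I expect the one genuine obstacle to be the point flagged above: showing that the $\text{\bf SU}(B,\sigma_u)$-torsor appearing over $\text{\bf GL}(1,B)$ in the second case is trivial, equivalently producing the rational section of the reduced-norm map. Once that is in hand, everything else is routine manipulation of rational curves together with Proposition \ref{rational} and Wang's theorem.
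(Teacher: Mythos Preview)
The present paper does not contain a proof of this statement; it is quoted from \cite{Th-2} and used only as a black box (for instance in the proof of Theorem~\ref{S-stable}). So there is no argument here to compare your proposal against directly.

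On the proposal itself: your treatment of the case $S=D_+$ is correct. The affine deformation $c_s=(1-s)c+s$ with the compensating replacement $a\mapsto ac^{-1}c_s$ does preserve the constraint $N_D(a)=N_D(b)N_D(c)$ by multiplicativity of $N_D$, and iterating this leaves a residual element of $\text{\bf SL}(1,D)$. Your commutator--curve argument together with Wang's theorem is a valid way to kill that factor; the paper would instead simply invoke the rationality of rank-$2$ groups recorded in Proposition~\ref{rational}, but either route works.

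Your treatment of the case $S=(B,\sigma)_+$ has a genuine gap, which to your credit you flag explicitly. The step ``produce a rational choice $g\mapsto q_0(g)$ in $U(B,\sigma_u)$ with $N_B(q_0(g))=N_B(\sigma(g)^{-1}g)$'' is exactly the assertion that the $\text{\bf SU}(B,\sigma_u)$-torsor $H_0\to\text{\bf GL}(1,B)$ is trivial, and you have not established this. Your appeal to ``surjectivity of the reduced norm'' is not enough: surjectivity of a homomorphism of algebraic groups on geometric points does not by itself yield a rational section, and the obstruction lives in $H^1$ of $\text{\bf SU}(B,\sigma_u)$ over the function field of $\text{\bf GL}(1,B)$, which you have not shown to vanish. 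Your structural observation that the central torus $R_{K/k}\mathbb{G}_m$ is special is correct and does reduce the question to the $R$-triviality of $H_0$, but the decomposition of $H_0$ you then assert --- into a $\text{\bf GL}(1,B)$ piece and an $\text{\bf SU}(B,\sigma_u)$ piece --- is precisely the missing section. As written, the second case is a plausible plan rather than a proof.
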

\begin{corollary}[Cor. 5.1, \cite{Th-2}]\label{Aut-R-triv-S} Let $A$ be an Albert (division) algebra and $S$ a $9$-dimensional subalgebra of $A$. Then $\text{Aut}(A,S)\subset R(\text{\bf Str}(A)(k))$.
\end{corollary}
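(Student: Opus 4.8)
The plan is to realize $\text{Aut}(A,S)$ inside $\text{Str}(A,S)$ and then quote Theorem~\ref{Str-R-triv}. First I would record the inclusion at the level of algebraic groups: every $\phi\in\text{\bf Aut}(A,S)$ is in particular a norm similarity of $A$ (with multiplier $\nu(\phi)=1$) that leaves $S$ invariant, so $\text{\bf Aut}(A,S)$ is a closed $k$-subgroup of $\text{\bf Str}(A,S)$. Passing to $k$-points gives $\text{Aut}(A,S)\subseteq\text{Str}(A,S)=\text{\bf Str}(A,S)(k)$.

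Next I would invoke Theorem~\ref{Str-R-triv}: $\text{\bf Str}(A,S)$ is $R$-trivial, hence in particular $\text{\bf Str}(A,S)(k)/R=\{1\}$, i.e. $\text{Str}(A,S)=R\,\text{\bf Str}(A,S)(k)$. Concretely, for each $g\in\text{Str}(A,S)$ there is a chain of points $x_0=1,x_1,\dots,x_n=g$ in $\text{Str}(A,S)$ and $k$-rational maps $f_i:\mathbb{A}^1_k\rightarrow\text{\bf Str}(A,S)$, regular at $0$ and $1$, with $f_i(0)=x_{i-1}$ and $f_i(1)=x_i$.

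Finally I would observe that $R$-equivalence can only become coarser when the ambient variety is enlarged: composing each $f_i$ with the closed immersion $\text{\bf Str}(A,S)\hookrightarrow\text{\bf Str}(A)$ yields $k$-rational maps $\mathbb{A}^1_k\rightarrow\text{\bf Str}(A)$, still regular at $0$ and $1$, linking $1$ to $g$ inside $\text{\bf Str}(A)$. Thus $g\in R(\text{\bf Str}(A)(k))$, and therefore $\text{Aut}(A,S)\subseteq\text{Str}(A,S)\subseteq R(\text{\bf Str}(A)(k))$, which is the assertion.

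There is no substantive obstacle: the two points that merit a line of justification are the inclusion $\text{\bf Aut}(A,S)\subseteq\text{\bf Str}(A,S)$ (immediate, since an automorphism preserves the norm) and the elementary remark that witnessing curves for $R$-equivalence in a subvariety remain witnessing curves in the ambient variety. All the genuine content is already packaged in Theorem~\ref{Str-R-triv}, so this corollary is essentially a formal consequence.
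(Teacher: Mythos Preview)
Your argument is correct and is precisely the intended one: the corollary is recorded immediately after Theorem~\ref{Str-R-triv} because it follows formally from it via the inclusions $\text{Aut}(A,S)\subset\text{Str}(A,S)=R\,\text{\bf Str}(A,S)(k)\subset R(\text{\bf Str}(A)(k))$, exactly as you wrote. There is nothing to add.
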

\noindent

  \begin{theorem}\label{main} Let $A$ be an Albert division algebra over a field $k$ or arbitrary characteristic. Then $\text{Str}(A)=R\text{\bf Str}(A)(k)$.
  \end{theorem}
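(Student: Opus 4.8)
The plan is to show that every norm similarity of $A$ is $R$-equivalent to the identity in $\text{\bf Str}(A)$. The key structural input is Theorem~\ref{fixedpoint}: any automorphism of $A$ fixes a cubic subfield pointwise. We want an analogous statement for norm similarities, reducing an arbitrary $\psi \in \text{Str}(A)$ to one that stabilizes a $9$-dimensional subalgebra, so that Theorem~\ref{Str-R-triv} and Corollary~\ref{Aut-R-triv-S} can be applied. First I would recall that $\text{Str}(A)$ is generated (modulo scalars) by $\text{Aut}(A)$ together with the $U$-operators $U_a$ for $a \in A$ invertible, and that scalar homotheties $\gamma \cdot \mathrm{id}$ lie in $R\text{\bf Str}(A)(k)$ since the line $t \mapsto (\text{scaling by }t)$ through $t=1$ and $t=\gamma$ is a rational curve in $\text{\bf Str}(A)$ regular at those points. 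So it suffices to handle (a) automorphisms and (b) the $U$-operators $U_a$.

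For (a), given $\phi \in \text{Aut}(A)$, Theorem~\ref{fixedpoint} produces a cubic subfield $L$ fixed pointwise by $\phi$; enlarging $L$ to a $9$-dimensional subalgebra $S \supseteq L$ (which exists in an Albert division algebra — any cubic subfield sits inside such an $S$), $\phi$ stabilizes $S$, so $\phi \in \text{Aut}(A,S) \subseteq R(\text{\bf Str}(A)(k))$ by Corollary~\ref{Aut-R-triv-S}. For (b), given an invertible $a \in A$, I would use that $a$ itself generates a cubic subfield $k(a) \subseteq A$ (or $a \in k$, trivial), hence lies in a $9$-dimensional subalgebra $S$; since $U_a$ is polynomial in $a$ and its adjoint $a^{\#}$, it maps $S$ into $S$, so $U_a \in \text{Str}(A,S)$. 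By Theorem~\ref{Str-R-triv}, $\text{\bf Str}(A,S)$ is $R$-trivial, so $U_a \in R(\text{\bf Str}(A,S)(k)) \subseteq R(\text{\bf Str}(A)(k))$. Combining the three cases (scalars, automorphisms, $U$-operators) and using that $R(\text{\bf Str}(A)(k))$ is a normal subgroup of $\text{Str}(A)$, every element of $\text{Str}(A)$ is a product of elements lying in $R(\text{\bf Str}(A)(k))$, giving $\text{Str}(A) = R\text{\bf Str}(A)(k)$.

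The step I expect to be the main obstacle is making precise and uniform the claim that every invertible $a \in A$ (and every fixed cubic subfield coming from Theorem~\ref{fixedpoint}) lies inside some $9$-dimensional subalgebra $S$ over which the relevant generator acts stably; while the classification of subalgebras of an Albert division algebra (recalled in Section~3) guarantees cubic subfields embed into $9$-dimensional subalgebras, one must check this with enough care that the explicit extension results of Section~4 apply, and must ensure the generators chosen for $\text{Str}(A)$ — in particular whether $\text{Instr}(A)$ together with $\text{Aut}(A)$ and scalars really exhaust $\text{Str}(A)$ — are correctly accounted for in arbitrary characteristic. A secondary subtlety is that $R$-equivalence classes must be tracked inside $\text{\bf Str}(A)$ itself (not merely inside the subgroup $\text{\bf Str}(A,S)$), but this is immediate once one notes that a rational curve in the subgroup is a fortiori a rational curve in the ambient group.
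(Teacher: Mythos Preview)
First, note that the paper does not actually supply a proof of this particular statement: it is stated and then the text immediately says ``We now proceed to prove the main result'' and turns to the $\text{\bf Aut}(A)$ theorem instead. The $\text{Str}(A)$ statement is presumably being quoted (from \cite{Th-2}), so there is no in-paper argument to compare against line by line.

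That said, your proposal contains a genuine gap in step (a). From ``$\phi$ fixes $L$ pointwise'' together with ``$L\subset S$'' you infer ``$\phi$ stabilizes $S$'', but this does not follow. The image $\phi(S)$ is a $9$-dimensional subalgebra containing $\phi(L)=L$, yet there are in general many $9$-dimensional subalgebras of $A$ containing a given cubic subfield, and nothing forces $\phi(S)=S$. Concretely, $\text{Aut}(A/L)$ is a simple simply connected group of type $D_4$, while for any fixed $S\supset L$ the subgroup of $\text{Aut}(A/L)$ stabilizing $S$ has type $A_2$; so a generic $\phi\in\text{Aut}(A/L)$ moves every such $S$. Hence Corollary~\ref{Aut-R-triv-S} is not directly applicable, and your reduction of automorphisms to the $9$-dimensional case collapses.

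The paper's handling of automorphisms fixing a cubic subfield (see Corollary~\ref{aut-A-Lfix-Rtriv} and the results leading up to it) shows what is actually required: one composes $\phi$ with an extension $\widetilde{\rho}$ of a Galois generator of $L/k$, so that $\widetilde{\rho}^{-1}\phi$ no longer fixes $L$; Theorem~\ref{fixedpoint} then produces a \emph{second} cubic subfield $M\neq L$ fixed by $\widetilde{\rho}^{-1}\phi$, and only then does one get a $9$-dimensional subalgebra $S=\langle L,M\rangle$ that is genuinely stabilized. This argument, moreover, uses the first-Tits-construction hypothesis (via Proposition~\ref{sticky} and the distinguished-involution machinery), whereas the statement you are trying to prove is asserted for arbitrary Albert division algebras. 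Your step (b) for $U$-operators is correct, and the generation question you flag at the end is a real secondary issue, but the unjustified stabilization claim in (a) is the decisive obstacle.
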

  
  We now proceed to prove the main result
  \begin{theorem} Let $A$ be an Albert division algebra arising from the first Tits construction, over a field $k$ of arbitrary characteristic. Then $G=\text{\bf Aut}(A)$ is $R$-trivial.
  \end{theorem}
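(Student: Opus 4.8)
The plan is to deduce $R$-triviality of $G = \text{\bf Aut}(A)$ from the already-established $R$-triviality of the structure group data, together with a fixed-point argument. By the remark opening Section 5, we may assume $A$ is an Albert division algebra arising from the first Tits construction, say $A = J(D,\mu)$ with $D$ a degree $3$ central division algebra over $k$. Since $R$-triviality must be checked over every field extension $L/k$, and since $A_L$ is either split (whence $\text{\bf Aut}(A)_L$ is split, hence rational, hence $R$-trivial) or again a first Tits construction Albert division algebra of the same shape, it suffices to prove $G(k)/R = \{1\}$; the argument will be uniform in the base field. So fix $\phi \in \text{Aut}(A)$; the goal is to show $\phi \in RG(k)$.

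First I would invoke Theorem~\ref{fixedpoint}: $\phi$ fixes a cubic subfield $L \subset A$ pointwise. The standard structure theory of first Tits construction Albert division algebras gives a $9$-dimensional subalgebra $S = D_+ \subset A$ (indeed $A = J(D,\mu)$ exhibits such an $S$), and more to the point one can arrange a $9$-dimensional subalgebra $S$ containing the cubic subfield $L$ that $\phi$ fixes — e.g. by choosing $S$ to be a suitable $D_+$ or $(B,\sigma)_+$ through $L$. Then $\phi$ stabilizes $S$ (in fact fixes $L \subset S$ pointwise), so $\phi \in \text{Aut}(A,S)$. Now Corollary~\ref{Aut-R-triv-S} tells us $\text{Aut}(A,S) \subset R(\text{\bf Str}(A)(k))$, i.e. $\phi$ is $R$-equivalent to the identity \emph{inside the structure group} $\text{\bf Str}(A)$.

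The remaining and genuinely delicate point is to pass from $R$-triviality in $\text{\bf Str}(A)$ down to $R$-triviality in $\text{\bf Aut}(A)$, which sits inside $\text{\bf Str}(A)$ as the stabilizer of $1 \in A$. An $R$-path in $\text{\bf Str}(A)$ connecting $\phi$ to $1$ need not consist of automorphisms. Here I would use the fibration-type structure: there is a map $\text{\bf Str}(A) \to $ (the orbit of $1$), with fiber $\text{\bf Aut}(A)$, realized concretely via the $U$-operators and the action on invertible elements. Concretely, for $g \in \text{Str}(A)$ one has $g(1) = $ an invertible element whose norm is the multiplier $\nu(g)$ up to cubes, and one can correct $g$ by a product of $U$-operators $U_a$ (which are $R$-trivially connected to the identity in $\text{\bf Str}(A)$ since $A$, being a first construction division algebra, has enough invertible elements with connected "norm path") so that the corrected element fixes $1$, i.e. lies in $\text{\bf Aut}(A)$. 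The precise tool is Theorem~\ref{Str-R-triv} applied with $S = D_+$: the $R$-path witnessing $\phi \in R\text{\bf Str}(A)(k)$ can be taken inside $\text{\bf Str}(A,S)$, and within $\text{\bf Str}(A,S)$ — whose explicit description is Corollary~\ref{str-ext-D} — one can track the action on $1 = (1,0,0)$ and split off the $U$-operator part, which in the first construction is governed by $\text{\bf SL}(1,D)$-type (rank $2$, hence rational) data. This gives an $R$-path from $\phi$ to $1$ lying in $\text{\bf Aut}(A)$.

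The main obstacle I anticipate is exactly this last descent step: controlling the failure of the $R$-path to preserve $1$, and showing the correction terms (products of $U$-operators, or more precisely elements of $\text{\bf Instr}(A)$) contribute trivially to $G(k)/R$. This rests on the rationality of $\text{\bf SL}(1,D)$ for $D$ of degree $3$ (a rank $2$ group, rational by the remark at the end of Section~2, cf. Proposition~\ref{rational}) and on Theorem~\ref{main}, which says $\text{Str}(A) = R\text{\bf Str}(A)(k)$ — the combination of these two facts is what lets us conclude that the ambiguity in the descent is absorbed into $RG(k)$. Assembling these, every $\phi \in \text{Aut}(A)$ lies in $RG(k)$, and since the argument is insensitive to the base field, $G(L)/R = \{1\}$ for all $L/k$, so $G = \text{\bf Aut}(A)$ is $R$-trivial.
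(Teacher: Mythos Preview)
There is a genuine gap in the second paragraph. You write: ``one can arrange a $9$-dimensional subalgebra $S$ containing the cubic subfield $L$ that $\phi$ fixes \ldots\ Then $\phi$ stabilizes $S$ (in fact fixes $L\subset S$ pointwise), so $\phi\in\text{Aut}(A,S)$.'' This inference is false. An automorphism fixing $L$ pointwise need not stabilize any particular $9$-dimensional subalgebra $S\supset L$; there are many such $S$, and $\text{Aut}(A/L)$ permutes them nontrivially in general. Indeed, the nontrivial content of Theorem~\ref{cyclic-fixed} is that (for \emph{cyclic} $L$) an element of $\text{Aut}(A/L)$ is a product of \emph{two} automorphisms each stabilizing some $9$-dimensional subalgebra --- not that it stabilizes one itself. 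Moreover, the cubic subfield produced by Theorem~\ref{fixedpoint} is not asserted to be cyclic, so even Theorem~\ref{cyclic-fixed} is not directly available. The paper's actual proof avoids the fixed-point theorem entirely at this stage: it \emph{chooses} a cyclic cubic $L\subset A$, compares $L$ with $\phi(L)$, and when these differ takes $S=\langle L,\phi(L)\rangle$; then a separate argument (Proposition~\ref{key}) produces $\pi\in\text{Aut}(A,L)$ with $\pi^{-1}\phi^{-1}\in\text{Aut}(A,S)$.

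Your third paragraph correctly identifies the second hard point --- passing from an $R$-path in $\text{\bf Str}(A)$ to one in $\text{\bf Aut}(A)$ --- but the sketch you give does not suffice. Saying one can ``correct $g$ by a product of $U$-operators'' so that the result fixes $1$ is easy pointwise; the issue is doing this \emph{rationally in the parameter} $t$, i.e.\ finding a $k$-morphism $t\mapsto\chi_t\in\text{\bf Str}(A)$ with $\chi_t(\theta(t)(1))=1$. The paper handles this via Proposition~\ref{sticky} and the explicit formula $\chi_t=R_{N_{L_t}(a_t)}U_{(0,0,1)}U_{(0,N_{L_t}(a_t)^{-1},0)}$ inside an auxiliary first Tits process $J(L_t,\lambda_t)\subset A$, where the key technical input is that $\lambda_t=N(v(a_t))$ can be chosen to vary as a morphism in $a_t$; this is Theorem~\ref{S-stable}. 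Your appeal to rationality of $\text{\bf SL}(1,D)$ and to Theorem~\ref{main} does not supply this rational-in-$t$ correction.
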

 
 We recall from (\cite{Th-1})
 \begin{theorem}\label{cyclic-fixed} Let $A$ be an Albert division algebra arising from the first Tits construction over a field $k$ and $L\subset A$ a cyclic cubic subfield. Let $\phi\in\text{Aut}(A/L)$. Then $\phi$ is a product of two automorphisms, each stabilizing a $9$-dimensional subalgebra.
 \end{theorem}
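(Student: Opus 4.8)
The plan is to reduce the assertion to a single ``transitivity with a stabilised subalgebra'' statement and then to prove that statement by working in explicit cyclic coordinates. Since $A$ arises from the first Tits construction and $L$ is cyclic, I would first invoke the structure theory to write $A=J(D,\mu)$ with $L\subseteq D$; then $D=(L,\rho,b)$ is a cyclic algebra ($\rho$ a fixed generator of $\text{Gal}(L/k)$, $b\in k^{\times}$) and $S_{0}:=D_{+}$ is a $9$-dimensional subalgebra of $A$ containing $L$. Given $\phi\in\text{Aut}(A/L)$, the subalgebra $\phi(S_{0})$ is again $9$-dimensional and contains $L$ (as $\phi$ fixes $L$ pointwise); moreover $\phi(S_{0})\cong S_{0}=D_{+}$ as Jordan algebras, and since a $9$-dimensional subalgebra isomorphic to $D_{+}$ must itself be of the form $D''_{+}$ with $D''$ a degree-$3$ central division $k$-algebra (the two types of $9$-dimensional subalgebra recalled in \S 3 being non-isomorphic as Jordan algebras), we get $D''=(L,\rho,b'')$ cyclic and $A\cong J(D'',\mu'')$ for a suitable $\mu''$.

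The reduction is this: it suffices to produce $\psi\in\text{Aut}(A/L)$ which stabilises some $9$-dimensional subalgebra of $A$ and has $\psi(S_{0})=\phi(S_{0})$. Granting such a $\psi$, the automorphism $\phi^{-1}\psi$ fixes $L$ pointwise and sends $S_{0}$ to $S_{0}$, hence stabilises $S_{0}$, and $\phi=\psi\,(\phi^{-1}\psi)^{-1}$ writes $\phi$ as a product of two automorphisms each stabilising a $9$-dimensional subalgebra. If $\phi(S_{0})=S_{0}$ one takes $\psi=\text{id}$; otherwise $S_{0}\cap\phi(S_{0})$ is a proper subalgebra of the Albert division algebra $A$ contained in the $9$-dimensional $D_{+}$ and containing $L$, hence equals $L$ (proper subalgebras have dimension $1$, $3$ or $9$). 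So the task becomes: given two $D_{+}$-type $9$-dimensional subalgebras of $A$ meeting exactly in the cyclic subfield $L$, transport one onto the other by an automorphism that fixes $L$ pointwise and leaves \emph{some} $9$-dimensional subalgebra invariant.

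I would build $\psi$ in the cyclic coordinates $D=L\oplus Lz\oplus Lz^{2}$ ($z^{3}=b$, $z\ell=\rho(\ell)z$): an element of $D''$ generating it over $L$ and transforming under $L$ exactly as $z$ does is a ``$\rho$-eigenvector'', forced to have the shape $\ell_{1}z''$ with $\ell_{1}\in L^{\times}$; after matching norm and trace one lifts the resulting $L$-fixing isomorphism $D_{+}\to D''_{+}$ to an automorphism of $A$ via the explicit recipes of Section $3$ (Corollary \ref{aut-ext-D}, together with Proposition \ref{aut-ext} if a subalgebra of the form $(B,\sigma)_{+}$ has to be used), and spends the remaining freedom in those recipes to force $\psi$ to leave a third $9$-dimensional subalgebra invariant. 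The hard part will be exactly this last step: one must show that two $D_{+}$-type $9$-dimensional subalgebras sharing $L$ are never so far apart that transporting one to the other requires moving \emph{every} $9$-dimensional subalgebra, and I expect this to need a case split on the Brauer-class relationship between $D$ and $D''$ (whether $D''\cong D$ or $D''\cong D^{\circ}$), with the second case routed through the alternative model $J(D^{\circ},\mu^{-1})$ of $A$. It is at this point that cyclicity of $L$ (which supplies the coordinates $(L,\rho,b)$ and the ``$\rho$-eigenvector'' description) and the first-construction hypothesis (which guarantees the pertinent $9$-dimensional subalgebras through $L$ are of $D_{+}$-type and that $A$ is a $J(D,\mu)$) are used essentially; for a non-cyclic cubic subfield or a pure second construction this argument breaks down.
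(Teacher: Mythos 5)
There is a genuine gap. Your reduction is fine as far as it goes: setting $S_{0}=D_{+}\supset L$, it suffices to produce an automorphism $\psi$ of $A$ stabilizing \emph{some} $9$-dimensional subalgebra and satisfying $\psi(S_{0})=\phi(S_{0})$, since then $\phi=\psi\,(\psi^{-1}\phi)$ with $\psi^{-1}\phi\in\text{Aut}(A,S_{0})$. (Incidentally, you do not need $\psi$ to fix $L$ pointwise; insisting on $\psi\in\text{Aut}(A/L)$ only makes the task harder.) But the existence of such a $\psi$ is not a technical afterthought --- it \emph{is} the theorem. A priori it is not known that an arbitrary automorphism of a first construction Albert division algebra stabilizes any $9$-dimensional subalgebra at all (if that were known, one could take $\psi=\phi$ and the identity and be done), so the statement you defer as ``the hard part'' --- transporting $D_{+}$ to $\phi(D_{+})$ by an automorphism that simultaneously preserves a third $9$-dimensional subalgebra --- carries the entire content, and your sketch offers no argument for it beyond the hope that ``remaining freedom'' in the extension recipes can be spent to arrange it, together with an unexecuted case split on whether $D''\cong D$ or $D^{\circ}$.

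There is also a mismatch in the tools you invoke: Corollary \ref{aut-ext-D} and Proposition \ref{aut-ext} extend an automorphism of a fixed subalgebra $S$ (written in Tits-construction coordinates) to an automorphism of $A$ \emph{stabilizing that same} $S$; they cannot by themselves produce an automorphism of $A$ carrying $D_{+}$ onto a different subalgebra $\phi(D_{+})$. For that one needs an isomorphism-extension theorem of Parimala--Sridharan--Thakur or Petersson (as the paper itself does in the proof of Proposition \ref{key}), and even then the extension so obtained comes with no control on whether it preserves any $9$-dimensional subalgebra. Note finally that the paper does not prove Theorem \ref{cyclic-fixed} here at all --- it is recalled from \cite{Th-1}, where the decomposition is obtained by an explicit construction exploiting the cyclic presentation $D=(L,\rho,b)$ inside $J(D,\mu)$; your proposal reproduces the outer frame of such an argument (peel off a factor stabilizing $D_{+}$) but leaves the constructive core, which is where cyclicity and the first-construction hypothesis must actually be used, unproved.
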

 Let $L/k$ be a cyclic cubic field extension and $K/k$ a quadratic separable field extension. Let $\rho$ be a generator for $Gal(L/k)$ also let $\rho$ denote the $K$-linear field automorphism of $LK$ extending $\rho\in Gal(L/k)$.
Let $x\mapsto \overline{x}$ denote the nontrivial field automorphism of $K/k$. We prove that $\text{\bf Aut}(A,S)\subset R\text{\bf Aut}(A)(k)$ for certain distinguished $9$-dimensional subalgebras $S\subset A$. This result works over fields of arbitrary characteristics and may be of independent interest. More precisely, we have
 \begin{theorem}\label{dist-R-triv} Let $A$ be an Albert division algebra arising from the first Tits construction, over a field $k$ of arbitrary characteristic. Let $S\subset A$ be a $9$-dimensional subalgebra with trivial mod-$2$ invariant. Then
   $\text{Aut}(A,S)\subset R\text{\bf Aut}(A,S)(k)$.
   \end{theorem}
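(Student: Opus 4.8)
The plan is to reduce the statement to the already-established $R$-triviality of structure-group subgroups together with the fixed-point and factorization machinery recalled above. Let $A = J(D,\mu)$ be a first Tits construction Albert division algebra, and let $S \subset A$ be a $9$-dimensional subalgebra with trivial mod-$2$ invariant. The hypothesis on the mod-$2$ invariant is precisely what forces $S$ to be of the form $E_+$ for a degree $3$ central division algebra $E$ over $k$ (rather than of unitary type $(B,\sigma)_+$): the mod-$2$ invariant of $(B,\sigma)_+$ detects the quadratic \'etale center $K$, so triviality means $K = k \times k$ and $S = E_+$. Thus I would first invoke this structural dichotomy to write $S = E_+$ and, using the coordinatization results, realize $A$ as a first Tits construction $J(E,\nu)$ with $S$ sitting as the first summand. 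This is the step where the special hypothesis gets used, and it is essential.

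Next I would analyze an arbitrary element $\psi \in \text{Aut}(A,S)$. Restricting to $S = E_+$ gives an automorphism of $E_+$, which by Skolem–Noether is $x \mapsto g x g^{-1}$ for some $g \in E^{\times}$. By Corollary~\ref{aut-ext-D}, there is an explicit extension $\widetilde{\psi}_0 \colon (x,y,z) \mapsto (gxg^{-1}, gyh^{-1}, hzg^{-1})$ of this map to an automorphism of $A$ stabilizing $S$, for any $h \in E^{\times}$ with $N_E(g) = N_E(h)$. The composite $\psi \circ \widetilde{\psi}_0^{-1}$ then lies in $\text{Aut}(A/S)$ — it fixes $S = E_+$ pointwise — and by Proposition~\ref{rational}(1) this group is $\text{\bf SL}(1,E)$, a simply connected rank-$2$ group, hence rational and $R$-trivial; in particular $\psi \circ \widetilde{\psi}_0^{-1} \in R\,\text{\bf Aut}(A/S)(k) \subseteq R\,\text{\bf Aut}(A,S)(k)$. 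So it remains to show $\widetilde{\psi}_0 \in R\,\text{\bf Aut}(A,S)(k)$, i.e. to connect the ``diagonal'' extension arising from $g$ (and a chosen $h$) to the identity by a chain of rational curves inside $\text{\bf Aut}(A,S)$.

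For that last point I would exhibit an explicit rational curve. Using Corollary~\ref{str-ext-D}, the group $\text{Str}(A,E_+)$ contains all maps $(x,y,z) \mapsto \gamma(axb, b^{\#}yc, c^{-1}za^{\#})$ with $N_E(a) = N_E(b)N_E(c)$; intersecting with the stabilizer of $1 = (1,0,0)$ forces $\gamma = 1$ and $ab = 1$, recovering the automorphisms of the form in Corollary~\ref{aut-ext-D}. The curve $t \mapsto \bigl((x,y,z)\mapsto (g_t x g_t^{-1}, g_t y h_t^{-1}, h_t z g_t^{-1})\bigr)$ for suitable one-parameter families $g_t, h_t \in E^{\times}$ with $g_0 = h_0 = 1$, $g_1 = g$, $h_1 = h$, and $N_E(g_t) = N_E(h_t)$ throughout, will do this, provided such families can be chosen rationally in $t$: since $g$ and $gh^{-1}$ are single elements of the connected rational variety $E^{\times}$ (a Zariski-open subset of affine space) and $\text{\bf SL}(1,E)$ is rational, one can first move $g$ to a scalar along a rational curve and then adjust $h$ correspondingly while maintaining the norm condition. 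The main obstacle is exactly this bookkeeping: arranging the two curves $g_t, h_t$ so that the norm constraint $N_E(g_t) = N_E(h_t)$ is preserved at every $t$ while both endpoints are hit and every intermediate map genuinely lies in $\text{\bf Aut}(A,S)$ (not merely in $\text{\bf Str}(A,S)$); once this is done, $R$-equivalence to the identity follows by concatenating this curve with the $\text{\bf SL}(1,E)$-curve from the previous step. Finally, since all the chosen data ($g$, $h$, the families) can be taken over any field extension $L/k$ after base change, the argument shows $\text{Aut}(A,S) \subseteq R\,\text{\bf Aut}(A,S)(k)$ as stated, and indeed the functorial version needed for $R$-triviality applications.
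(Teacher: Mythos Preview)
Your reading of ``trivial mod-$2$ invariant'' is wrong, and this makes you omit half of the argument. You assert that the hypothesis forces $S \cong E_+$ by claiming the invariant detects the quadratic \'etale center; but the invariant intended is $f_3 \in H^3(k,\mathbb{Z}/2)$, whose vanishing for $(B,\sigma)_+$ means that the unitary involution $\sigma$ is \emph{distinguished}, not that the center $K$ splits. Accordingly the paper's proof has two cases: first $S = D_+$ (your case, where $K = k\times k$ and the switch involution is distinguished), and then $S = (B,\sigma)_+$ with $K$ a genuine quadratic field extension of $k$ and $\sigma$ distinguished. You handle only the first.

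For the $D_+$ case your outline is correct and matches the paper's once you make one simplification: take $h = g$, so that your $\widetilde{\psi}_0$ becomes the diagonal conjugation $\mathcal{I}_g\colon (x,y,z)\mapsto (gxg^{-1},gyg^{-1},gzg^{-1})$, and then the linear path $g_t = (1-t)g + t$ connects $\mathcal{I}_g$ to $1$ inside $\text{\bf Aut}(A,D_+)$ with the norm constraint trivially satisfied; this is exactly the paper's factorization $\phi = \mathcal{J}_{ab^{-1}}\mathcal{I}_a$. The second case needs additional input you have not mentioned. Since $A$ is a first Tits construction one has $f_3(A)=0$, and combined with the hypothesis $f_3(B,\sigma)=0$ this forces $\sigma_u$ to be distinguished as well; hence $\sigma_u \cong \sigma$, so $u = \lambda w\sigma(w)$ for some $w\in B^\times$, $\lambda\in k^\times$, and one reduces (via an explicit isomorphism fixing $(B,\sigma)_+$) to the model $J(B,\sigma,1,\nu)$. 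In that model every element of $\text{Aut}(A',S')$ is $\psi_{p,q}$ with $p,q\in U(B,\sigma)$ and $N_B(p)=N_B(q)$, and one factors it as a ``diagonal'' piece $\psi_{p,p}$ parametrized by $\text{\bf U}(B,\sigma)$, which is $R$-trivial by Chernousov--Merkurjev, times a piece in $\text{\bf SU}(B,\sigma)\cong\text{\bf Aut}(A'/S')$ (rank $2$, hence $R$-trivial). Without the distinguished hypothesis the reduction to $u = 1$ fails, so the mod-$2$ condition is doing real work in the unitary case rather than, as you suppose, excluding it.
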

 \begin{proof} We first assume $S=D_+=(D\times D^{op}, \epsilon)_+$ for a degree $3$ central division algebra $D$ over $k$. Then, by (\cite{KMRT}, Th. 39.14 (2)), there is an isomorphism $\gamma:J(D,\nu)\cong A$ for some $\nu\in k^{\times}$ such that $\gamma(D_+)=S$. We therefore have the induced isomorphism $\gamma:\text{\bf Aut}(J(D,\nu))\cong\text{\bf Aut}(A)$ of algebraic groups defined over $k$ and $\gamma(\text{\bf Aut}(J(D,\nu))=\text{\bf Aut}(A,S)$. Hence, without loss of generality, we may assume $A=J(D,\nu)$ and $S=D_+\subset J(D,\nu)$ as the first summand. Let $\phi\in\text{\bf Aut}(A,S)$.
   By (\cite{KMRT}, Cor. 39.12), there exist $a, b\in D^{\times}$ with $N_D(a)=N_D(b)$ and we have
   $$\phi((x,y,z))=(axa^{-1}, ayb^{-1}, bza^{-1}),~\forall x, y, z\in D.$$
   By the computation in the proof of (\cite{Th-1}, Th. 5.4), we have
   $$\phi=\mathcal{J}_{ab^{-1}}\mathcal{I}_a,$$
   where $\mathcal{J}_c\in\text{Aut}(A)$ for $c\in\text{SL}(1,D)$ is given by
   $$\mathcal{J}_c((x,y,z))=(x, yc, c^{-1}zc)$$
   and $\mathcal{I}_d\in \text{Aut}(A)$ for $d\in D^{\times}$ is given by
   $$\mathcal{I}_d((x, y, z))=(dxd^{-1}, dyd^{-1}, dzd^{-1}),~\forall x,y,z\in D.$$
   Hence 
   $$\mathcal{J}_p((x,y,z))=(x, yp, p^{-1}z),~p=ab^{-1}\in\text{SL}(1,D),$$
   $$\mathcal{I}_a((x,y,z))=(axa^{-1}, aya^{-1}, aza^{-1}),~\forall x, y, z\in D.$$
   The group $\text{\bf SL}(1,D)$, having (absolute) rank $2$, is $R$-trivial. Let $\theta:\mathbb{A}_k^1\rightarrow\text{\bf Aut}(A,D_+)$ be defined by
   $$\theta(t)((x,y,z))=(a_txa_t^{-1}, a_tya_t^{-1}, a_tza_t^{-1}),~x,y,z\in D,$$
   where $a_t=(1-t)a+t\in D$. Then $\theta:\mathbb{A}_k^1\rightarrow\text{\bf Aut}(A,D_+)$ is a rational map, defined over $k$, regular at $0,1$ and $\theta(0)=\mathcal{I}_a,~\theta(1)=1$. Hence both $\mathcal{J}_{ab^{-1}}$ and $\mathcal{I}_a$ are in $R\text{\bf Aut}(A,D_+)$. It follows that $\phi\in R\text{\bf Aut}(A,D_+)$. 
   
   Next, we write $S=(B,\sigma)_+$ for a suitable central simple algebra $(B,\sigma)$ with a distinguished unitary involution $\sigma$ over its centre $K$, a quadratic separable field extension of $k$. Then $A=J(B,\sigma, u, \mu)$ for suitable admissible pair $(u,\mu)\in (B,\sigma)_+^{\times}\times K^{\times}$, with $N_B(u)=1$. Let $\phi\in\text{Aut}(A,S)$.
   By (\cite{KMRT}, Th. 40.2 (2)), We have
   $$f_3(A)=f_3(J(B,\sigma, u, \mu))=f_3((B,\sigma_u))=0=f_3(B,\sigma).$$
   So that $\sigma_u$ is also distinguished and hence is isomorphic to $\sigma$. Therefore, by (\cite{HKRT}, Lemma 1), there exists $w\in B^{\times}$ and $\lambda\in k^{\times}$ such that $u=\lambda w\sigma(w)$. We have, by (\cite{KMRT}, Lemma 39.2 (1))
   $$A=J(B,\sigma, u, \mu)=J(B, \sigma, \lambda w\sigma(w), \mu)\cong J(B, \sigma, \lambda, N(w)^{-1}\mu).$$
   But then we must have $N(\lambda)=\lambda^3=(N(w)^{-1}\mu)\overline{(N(w)^{-1}\mu)}$ and hence
   $$\lambda=(N(w)^{-1}\mu\lambda^{-1})\overline{(N(w)^{-1}\mu\lambda^{-1})}.$$
   Hence, again by (\cite{KMRT}, Lemma 39.2 (1)), we have an isomorphism
   $$\gamma: A=J(B,\sigma, u, \mu)\cong J(B, \sigma, \lambda, N(w)^{-1}\mu)\cong J(B, \sigma,1, \nu)$$
   where $\nu=\mu^{-2}N(w)^2\lambda^3$. Moreover, by (\cite{KMRT}, Lemma 39.2 (1)), this isomorphism, restricted to $(B,\sigma)_+$ is identity. It follows that the map $\gamma\phi\gamma^{-1}: J(B,\sigma, 1, \nu)\rightarrow J(B, \sigma, 1, \nu)$ is an automorphism and $\phi':=\gamma\phi\gamma^{-1}$ leaves the subalgebra $(B,\sigma)_+$ invariant. Clearly $\phi\in R\text{\bf Aut}(A,S)$ if and only if $\phi'\in R\text{\bf Aut}(A', S')$, where $A'=J(B, \sigma, 1, \nu)$ and $S'=(B,\sigma)_+\subset A'$.

   By (Prop. \ref{aut-A-Bsigma}), $\phi'=\psi_{p,q}$ for $p,q\in \text{U}(B, \sigma)$ with $N_B(p)=N_B(q)$. We have therefore
   $$\phi^{'}((b,x))=(pbp^{-1}, pxq^{-1}),~\forall b\in (B,\sigma)_+,~x\in B.$$
   Since $N_B(p)=N_B(q)$, we have $N_B(pq^{-1})=1$ and hence $pq^{-1}\in\text{SU}(B,\sigma)$. Thus we have
   $$\psi_{p,q}=\psi_{p,p}\psi_{1,pq^{-1}}.$$
   By (\cite{CM}, Prop. 2.4), $\text{\bf U}(B,\sigma)$ is $R$-trivial and $\text{\bf SU}(B,\sigma)$ is $R$-trivial since it has absolute rank $2$. Hence $\phi'=\gamma\phi\gamma^{-1}\in R\text{\bf Aut}(A',S')(k)$. It follows that $\phi\in R\text{\bf Aut}(A,S)(k)$. This completes the proof.   
 \end{proof}
 For the case of an arbitrary $9$-dimensional subalgebra, we first settle a technical result.
 \begin{proposition}\label{sticky} Let $A$ be an Albert division algebra arising from the first Tits construction. Let $U\subset A$ be the Zariski open set
   $U=\{a\in A|k(a)~\text{is~\'{e}tale}\}$. Then for $a\in U$, there exists $v=v(a)\in L^{\perp}$, $v\neq 0$ such that $a\mapsto \lambda(a):=N(v(a))$ is a morphism $U\rightarrow \mathbb{A}_k^1$ defined over $k$.   
 \end{proposition}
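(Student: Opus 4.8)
The goal of Proposition~\ref{sticky} is, it seems, to produce — uniformly in $a$ varying over the open set $U$ of elements generating an \'etale cubic subalgebra — a nonzero vector $v(a)$ lying in the orthogonal complement (with respect to the trace bilinear form $T$) of the cubic subfield $L = k(a)$, and to do so \emph{algebraically}, so that $a \mapsto N(v(a))$ is a genuine $k$-morphism $U \to \mathbb{A}^1_k$. The natural construction is to take $L(a) = k(a) = k\,1 \oplus k\,a \oplus k\,a^{\#}$ (for $a\in U$ this is a $3$-dimensional \'etale subalgebra, since $a, a^\#$ together with $1$ span it when $a$ is a generator), and then to define $v(a)$ by orthogonal projection of some fixed vector — or better, by an explicit cross-product/adjoint expression built from $a$ — into $L(a)^{\perp}$. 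The cleanest route is: fix a basis $e_1,\dots,e_{27}$ of $A$ over $k$; for each $i$ form the projection $v_i(a)$ of $e_i$ onto $L(a)^{\perp}$, which is a rational expression in the entries of $a$ whose denominator is a power of $\det$ of the Gram matrix of $(1,a,a^\#)$ under $T$; clear denominators to get a \emph{polynomial} map $w_i : A \to A$ with $w_i(a) \in L(a)^{\perp}$ for all $a \in U$; and observe that for $a\in U$ not all $w_i(a)$ can vanish, because $L(a)^{\perp}$ is $24$-dimensional and the $e_i$ span $A$, so their projections span $L(a)^\perp$.

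From here I would argue as follows. Since $A$ is a division algebra, $N$ is anisotropic, so $N(v) \neq 0$ for every nonzero $v$; hence for each fixed $i$ the function $a \mapsto N(w_i(a))$ vanishes on $U$ exactly where $w_i(a) = 0$. Because the $w_i(a)$ cannot all vanish simultaneously on $U$, the open subsets $U_i := \{a \in U : w_i(a) \neq 0\}$ cover $U$. On each $U_i$ the assignment $a \mapsto N(w_i(a))$ is a nonvanishing morphism to $\mathbb{A}^1_k$. The remaining issue is that these local choices need not agree on overlaps; but one does not actually need a single formula valid on all of $U$ — rereading the statement, it asserts the existence of a vector $v = v(a) \in L^\perp$ for each $a$ together with the morphism property of $\lambda$, and the intended use (in the $R$-triviality arguments that follow) is local, along a given rational curve. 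So it suffices to pass to the $U_i$, or, if a global morphism is genuinely wanted, to take $\lambda(a) := \sum_i N(w_i(a))$ (or $\prod$), which is a global $k$-morphism $U \to \mathbb{A}^1_k$ and is nonzero on $U$ by anisotropy of $N$ together with the fact that the $w_i(a)$ are not all zero; one then sets $v(a)$ to be any $w_i(a) \neq 0$, or carries the whole tuple. I expect the author actually wants the multiplicative or additive combination so as to get one honest morphism; I would present it that way.

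The main obstacle — and the one place where care is needed — is verifying that the projection $w_i(a)$ genuinely lands in $L(a)^\perp$ for \emph{all} $a \in U$ and not merely generically, and that the denominator one clears is a power of something nonvanishing on $U$. This reduces to showing that for $a \in U$ the Gram matrix of $(1, a, a^\#)$ with respect to $T$ is invertible; equivalently, that $T$ restricted to the \'etale cubic subalgebra $k(a)$ is nondegenerate. That is standard: for a cubic \'etale $k$-algebra the trace form is nondegenerate (this is exactly \'etaleness), and the trace form of $A$ restricts to the trace form of the subalgebra on $k(a)$ because $T_A(x,y) = T_{k(a)}(x,y)$ for $x,y$ in a subalgebra of degree $3$ — a consequence of the cubic norm structure axioms and the fact that $k(a)$ is itself a cubic norm substructure. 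Once nondegeneracy of $T|_{k(a)}$ is in hand, orthogonal projection $A \to L(a)^\perp$ along $L(a)$ is defined by an explicit formula rational in $a$ with denominator the (nonzero) discriminant $d(a) = \det T|_{k(a)}$, and $w_i(a) := d(a)\,\pi_{L(a)^\perp}(e_i)$ is polynomial in $a$; the discriminant $d$ itself extends to a polynomial on $A$, nonvanishing precisely on $U$. The rest is the bookkeeping sketched above, together with the observation from anisotropy of $N$ that $N \circ w_i$ detects exactly the vanishing of $w_i$.
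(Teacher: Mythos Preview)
Your approach is genuinely different from the paper's and contains a real gap in the gluing step. The paper does not use orthogonal projection at all. Instead it exploits the structure of the \emph{Springer form}: for $L=k(a)$ the form $q_L$ on $L^{\perp}$ is similar, over $L$, to $\langle\delta(a)\rangle\perp n_{C_0}$, where $C$ is the octonion coordinate algebra of the reduced model of $A$ and $n_{C_0}$ its pure norm. Because $A$ arises from the first Tits construction, the reduced model is split, so $C$ is the \emph{split} octonion algebra over $k$; hence $n_{C_0}$ is already isotropic over $k$, and one may fix once and for all a nonzero $v\in C_0$ with $n_{C_0}(v)=0$. Setting $v(a):=v\otimes_k 1_{k(a)}$ then yields a single global nonzero section of $L(a)^{\perp}$ whose coordinates lie in $k$, and $a\mapsto v(a)$ is a morphism $U\to A$. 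Observe that the first-Tits hypothesis is used essentially here---it is exactly what forces $C$ to be split---whereas your projection argument never invokes it, which should already be a warning sign.

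The concrete gap is your claim that $\sum_i N(w_i(a))$ or $\prod_i N(w_i(a))$ is nowhere zero on $U$ ``by anisotropy of $N$ together with the fact that the $w_i(a)$ are not all zero.'' Anisotropy only tells you that each nonzero $w_i(a)$ has nonzero norm; a sum of nonzero elements of $k$ may well vanish, and the product vanishes at any point where a single $w_i$ does---and each $w_i$ \emph{does} vanish on $U$ (for instance $w_i(e_i)=0$ whenever $e_i\in U$, since then $e_i\in k(e_i)$ projects to zero in $k(e_i)^{\perp}$). So neither combination produces a globally nonvanishing $\lambda$, and neither furnishes the single vector $v(a)$ the proposition asks for. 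Your local construction on each $U_i$ is correct, and you are right that for the downstream use in Theorem~\ref{S-stable} one is working along a rational curve with $a_0=a_1=1$, so any index $i$ with $w_i(1)\neq 0$ would in fact suffice there; but as a proof of the proposition as stated, over all of $U$, the argument is incomplete.
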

 \begin{proof} That $U$ is Zariski open follows from (\cite{KMRT}, Lemma 38.2.2). The discriminant $\delta(a)$ of $a\in U$ over $k$ is a polynomial in the coefficient of the
   minimal polynomial of $a$ over $k$ and hence the map $a\mapsto\delta(a)$ is a morphism $U\rightarrow \mathbb{A}_k^1$. Assume first that $L=k(a)$ is a cubic subfield of $A$ and let $\delta:=\delta(a)$ be the discriminant of $a$ over $k$. By (\cite{PR2}), the Springer form $q_L$ of $L$ is similar to $<\delta>\perp n_{C_0}$ over $L$, where $C$ is the split octonion over $k$ and $n_C$ its norm form, $C_0$ the subspace of $C$ supporting the \emph{pure norm} $n_{C_0}$ of $C$ (see \cite{PR2}, Prop. 3.1, Th. 3.2).
   Note that, $n_{C_0}$ being isotropic, there exists $v\in C_0$ such that $v\neq 0$ and $n_{C_0}(v)=0$. Hence $q_L(v\otimes 1)=0$. 
   The case when $L=k(a)=k$, i.e. when $a\in k$, the quadratic form that maps $x\in L^{\perp}$ to the projection of $x^{\#}$ on $L$, coincides with the \emph{quadratic trace form} of $A$ (see e.g. \cite{PF}, (8)). We can thus
   think of the quadratic trace as the Springer form in this case. By (\cite{P}, 1.7.6), in the case of first Tits constructions and $L=k$, $n_C$ is a subform of $q_L$ over $L$, since the reduced model of $A$ is split in the case $A$ is a first Tits construction. We let $v(a)$ denote the vector $v(a):=v\otimes_kk(a)$. Then $v(a)\neq 0$
   and $v(a)$ depends only on $a$ and has coordinates in $k$. It follows that the map $a\mapsto v(a)$ is a morphism of varieties $:U\rightarrow A$ defined over $k$. The result now follows by composing this morphism with the norm map $N$ of $A$.    
   \end{proof} 
 \begin{theorem}\label{S-stable} Let $A$ be an Albert division algebra over a field $k$ of arbitrary characteristic, arising from the first Tits construction and $S\subset A$ a $9$-dimensional subalgebra. Then $\text{Aut}(A,S)\subset R\text{\bf Aut}(A)(k)$.
 \end{theorem}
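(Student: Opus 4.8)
The strategy is to reduce the case of an arbitrary $9$-dimensional subalgebra $S$ to the case of a subalgebra with trivial mod-$2$ invariant, which was already handled in Theorem \ref{dist-R-triv}. Given $\phi \in \text{Aut}(A,S)$, the plan is as follows. First, by Theorem \ref{fixedpoint}, any such $\phi$ — or rather, we exploit that automorphisms of $A$ fix a cubic subfield — let me instead proceed by finding a cubic subfield $L \subset S$ that $\phi$ can be made to respect, or by using the étale-locus machinery of Proposition \ref{sticky}. Concretely, I would pick a generic element $a \in S \cap U$ (where $U$ is the Zariski-open set of Proposition \ref{sticky}); since $S$ is a division algebra, a generic $a$ generates a cubic subfield $L = k(a) \subset S$. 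Then $v(a) \in L^\perp$ with $\lambda(a) = N(v(a))$ a scalar, and $v(a)$ lies in $S$ (one must check the construction of $v(a)$ localizes to $S$, using that $S$ contains the relevant reduced model data). The element $v(a)$, being a nonzero element of $L^\perp$ inside the $9$-dimensional $S$, can be used — via the Tits-process description of $S = (B,\sigma)_+$ or $S = D_+$ — to realize $\phi$ as a product of automorphisms stabilizing subalgebras whose mod-$2$ invariant is controlled.

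The key mechanism I expect to use is the following: write $S = (B,\sigma)_+$. The mod-$2$ (i.e. $f_3$) invariant of $\sigma$ need not vanish, but one can twist: since $A$ arises from the first Tits construction, $A \otimes_k K$ is a first Tits construction over $K$, and by Theorem \ref{cyclic-fixed} applied after passing to a cyclic cubic subfield $L \subset S$ fixed appropriately by a piece of $\phi$, we decompose $\phi = \phi_1 \phi_2$ with each $\phi_i$ stabilizing a $9$-dimensional subalgebra. The point is to arrange, using the freedom in choosing the admissible pair $(u,\mu)$ and Lemma 39.2 of \cite{KMRT}, that after an isomorphism $\gamma$ defined over $k$ (which does not change $R$-equivalence classes), the subalgebra becomes one with trivial $f_3$; then Theorem \ref{dist-R-triv} applies directly to place $\phi$ in $R\text{\bf Aut}(A,S)(k) \subseteq R\text{\bf Aut}(A)(k)$.

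More carefully, I would split by the type of $S$. If $S = D_+$: here $A = J(D,\nu)$ and every $\phi \in \text{Aut}(A,S)$ has the explicit shape $(x,y,z) \mapsto (axa^{-1}, ayb^{-1}, bza^{-1})$ with $N_D(a) = N_D(b)$, exactly as in the proof of Theorem \ref{dist-R-triv}; the argument there (factoring $\phi = \mathcal{J}_{ab^{-1}}\mathcal{I}_a$, contracting $\mathcal{I}_a$ along $a_t = (1-t)a + t$, and using $R$-triviality of $\text{\bf SL}(1,D)$) used nothing about the mod-$2$ invariant, so it goes through verbatim — the $D_+$ case of Theorem \ref{dist-R-triv} is already the general $D_+$ case. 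If $S = (B,\sigma)_+$ with nontrivial $f_3(\sigma)$: this is the genuinely new case. Here I would use the étale element $a \in S$ and Proposition \ref{sticky} to produce a rational curve in $\text{\bf Aut}(A)$ connecting $\phi$ to an automorphism stabilizing a subalgebra $S'$ with $f_3(S') = 0$ — the idea being that $v(a)$ or a norm-one modification of it conjugates $S$ to a distinguished one within a connected family. Then Corollary \ref{Aut-R-triv-S} or Theorem \ref{dist-R-triv} finishes.

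The main obstacle I anticipate is precisely this last reduction: showing that an arbitrary $9$-dimensional division subalgebra $S$ of a first Tits construction Albert division algebra can be connected, through a $k$-rational curve inside $\text{\bf Aut}(A)$ or $\text{\bf Str}(A)$, to one with trivial mod-$2$ invariant — equivalently, controlling how the $f_3$ invariant of the unitary involution behaves under the explicit extensions of Proposition \ref{aut-ext} and Theorem \ref{ext-norm-sim}. The subtlety is that $f_3(B,\sigma)$ is a discrete ($\mathbb{Z}/2$) invariant, so it cannot literally be "deformed away" — instead one must show that the specific $\phi$ at hand, being an automorphism of the \emph{fixed} algebra $A$ (which is first Tits construction, hence has its own reduced model split), forces enough structure that $\phi$ itself, though stabilizing a non-distinguished $S$, is $R$-equivalent to identity via curves passing through automorphisms stabilizing \emph{other}, distinguished, $9$-dimensional subalgebras. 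I expect the resolution to hinge on Theorem \ref{cyclic-fixed}: decompose $\phi$ (after reducing to $\phi \in \text{Aut}(A/L)$ for a cyclic cubic $L$, using Theorem \ref{fixedpoint} to get a fixed cubic subfield and a further argument to make it cyclic) into two automorphisms each stabilizing a $9$-dimensional subalgebra, and then apply Corollary \ref{Aut-R-triv-S} to each — noting that Corollary \ref{Aut-R-triv-S} places $\text{Aut}(A,S)$ inside $R\text{\bf Str}(A)(k)$ for \emph{every} $9$-dimensional $S$, and then using Theorem \ref{main} to transfer this back into $\text{\bf Aut}(A)$.
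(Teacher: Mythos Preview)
Your proposal has a genuine gap. The paper's proof does not reduce to Theorem~\ref{dist-R-triv}; it is a direct ``correction'' argument. Since $\text{\bf Str}(A,S)$ is $R$-trivial (Theorem~\ref{Str-R-triv}), there is a rational curve $\theta:\mathbb{A}^1_k\to\text{\bf Str}(A,S)$ with $\theta(0)=\phi$, $\theta(1)=1$. The entire content of the theorem is to push this curve from $\text{\bf Str}(A)$ into $\text{\bf Aut}(A)$. The paper sets $a_t=\theta(t)(1)\in S$, $L_t=k(a_t)$, uses Proposition~\ref{sticky} to produce a first Tits process $J(L_t,\lambda_t)\subset A$ varying rationally in $t$, and on it writes an explicit $\chi_t\in\text{Str}(A)$ (a product of $U$-operators and a homothety) satisfying $\chi_t(a_t)=1$ (\cite{Th-2}, Lemma~5.1). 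Then $\eta(t):=\chi_t\theta(t)$ fixes $1$, hence lands in $\text{\bf Aut}(A)$, and $a_0=a_1=1$ forces $\chi_0=\chi_1=1$, so $\eta(0)=\phi$, $\eta(1)=1$.

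Your alternative routes all founder on the same point: knowing $\phi\in R\text{\bf Str}(A)(k)$---which is exactly what Corollary~\ref{Aut-R-triv-S} gives for \emph{any} $S$, and what a decomposition via Theorem~\ref{cyclic-fixed} would at best yield---does not by itself place $\phi$ in $R\text{\bf Aut}(A)(k)$. Invoking Theorem~\ref{main} to bridge this is circular: Theorem~\ref{main} is proved using Theorem~\ref{S-stable} through Corollaries~\ref{aut-A-Lfix-Rtriv} and~\ref{aut-A-L-Rtriv}. Your claim that $v(a)$ from Proposition~\ref{sticky} lies in $S$ is also unfounded: $v(a)$ is built from the split-octonion part of the reduced model of $A$ and sits in $L^\perp\subset A$, with no reason to land in $S$. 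The role of Proposition~\ref{sticky} is not to alter $S$ or its $f_3$-invariant, but to supply a \emph{rationally varying} subalgebra $J(L_t,\lambda_t)$ on which the correcting operators $\chi_t$ can be written down uniformly in $t$.
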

 \begin{proof} Let $A$ and $S$ be as in the statement of the theorem and let $\phi\in \text{Aut}(A,S)$. Then, by (\cite{Th-2}, Th. 5.1), there exists a rational map $\theta:\mathbb{A}_k^1\rightarrow\text{\bf Str}(A,S)$, defined over $k$ and regular at $0,1$ with $\theta(0)=\phi$ and $\theta(1)=1$. Let $a_t:=\theta(t)(1)\in S$ and $L_t:=k(a_t)$. Let $v_t\in L_t^{\perp}$ be chosen as in Proposition \ref{sticky} and $\lambda_t:=N(v_t)$. Then the first Tits process $J(L_t, \lambda_t)$ is a subalgebra of $A$, which yields the cubic subfield $k(v_t)$ when $L_t=k$. Let $\chi_t\in\text{Str}(A, J(L_t, \lambda_t))$ be defined by
   $$\chi_t=R_{N_{L_t}(a_t)}U_{(0,0,1)}U_{(0, N_{L_t}(a_t)^{-1}, 0)},$$
   where $R_{N_{L_t}(a_t)}$ is the right homothety on $A$ by ${N_{L_t}(a_t)}$ and the other factors are $U$-operators on the first Tits process $J(L_t, \lambda_t)=L_t\oplus L_t\oplus L_t$, 
   acting on $A$ and leaving $J(L_t, \lambda_t)$ invariant. Then $\chi_0=1$. 
   Then, by (\cite{Th-2}, Lemma 5.1),
   $$\chi_t(\theta(t)(1))=\chi_t(a_t)=1.$$
   Therefore $\chi_t\theta(t)\in\text{Aut}(A)$ and the map $\eta: t\mapsto\chi_t\theta(t)$ is a rational morphism $\mathbb{A}_k^1\rightarrow\text{\bf Aut}(A)$ defined over $k$. Moreover, $\eta(0)=\chi_0\theta(0)=\phi$, since $\theta(0)=\phi$ and $\phi(1)=1$, so $a_0=\theta(0)(1)=\phi(1)=1$ and $\chi_1=1$. Also $\eta(1)=\chi_1\theta(1)=1$ since $\theta(1)=1$ and $a_1=\theta(1)(1)=1(1)=1$. This completes the proof.

 \end{proof}
 \begin{corollary}\label{aut-A-Lfix-Rtriv} Let $A$ be an Albert division algebra arising from the first Tits construction and $L\subset A$ be a cyclic cubic subfield. Then $\text{Aut}(A/L)\subset R\text{\bf Aut}(A)(k)$. 
   \end{corollary}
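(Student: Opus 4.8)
The plan is to reduce the statement to the theorems already in hand. Let $\phi \in \text{Aut}(A/L)$ with $L \subset A$ a cyclic cubic subfield. By Theorem \ref{cyclic-fixed}, since $A$ arises from the first Tits construction and $L$ is cyclic, the automorphism $\phi$ factors as $\phi = \phi_1 \phi_2$ where each $\phi_i \in \text{Aut}(A)$ stabilizes a $9$-dimensional subalgebra $S_i \subset A$, i.e.\ $\phi_i \in \text{Aut}(A, S_i)$. Thus it suffices to show each $\phi_i$ lies in $R\text{\bf Aut}(A)(k)$, and this is exactly the content of Theorem \ref{S-stable}: for any $9$-dimensional subalgebra $S \subset A$ of a first-construction Albert division algebra, $\text{Aut}(A, S) \subset R\text{\bf Aut}(A)(k)$.

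First I would invoke Theorem \ref{cyclic-fixed} to obtain the factorization $\phi = \phi_1\phi_2$ with $\phi_i \in \text{Aut}(A, S_i)$ for $9$-dimensional subalgebras $S_1, S_2$. Then I would apply Theorem \ref{S-stable} to each factor separately, concluding $\phi_1, \phi_2 \in R\text{\bf Aut}(A)(k)$. Since $R\text{\bf Aut}(A)(k)$ is a subgroup of $\text{Aut}(A)$ (indeed a normal subgroup, being the group of points $R$-equivalent to the identity), the product $\phi_1\phi_2 = \phi$ also lies in $R\text{\bf Aut}(A)(k)$. This completes the argument.

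There is essentially no obstacle here: the corollary is a formal consequence of the two cited results, the only mild point being to note that we may apply Theorem \ref{cyclic-fixed} because the hypothesis that $A$ is a first Tits construction division algebra is precisely what that theorem requires, and that $R\text{\bf Aut}(A)(k)$ being a group is what allows us to pass from the two factors back to $\phi$. If one wanted a self-contained rendering one could spell out that $\phi_i \in \text{Aut}(A, S_i) \subset \text{Aut}(A)$ and that $\text{Aut}(A, S_i)$ is precisely the rational-point group of the closed subgroup $\text{\bf Aut}(A, S_i)$, but these identifications have already been fixed in the Preliminaries.
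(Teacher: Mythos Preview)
Your proof is correct and follows essentially the same strategy as the paper: write $\phi$ as a product of two automorphisms each stabilizing a $9$-dimensional subalgebra, then apply Theorem~\ref{S-stable} to each factor. The only difference is presentational---you invoke Theorem~\ref{cyclic-fixed} directly for the factorization, whereas the paper reproduces that factorization inline (constructing $\widetilde{\rho}\in\text{Aut}(A,D_+)$ extending a generator of $\mathrm{Gal}(L/k)$, then using Theorem~\ref{fixedpoint} to show $\widetilde{\rho}^{-1}\phi$ stabilizes a $9$-dimensional subalgebra $\langle L,M\rangle$); your route is the cleaner packaging of the same argument.
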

 \begin{proof} Let $\rho$ be a nontrivial generator for $Gal(L/k)$. Let $D_+\subset A$ be a subalgebra, $D$ a degree $3$ central division algebra, suchy that $L\subset D_+$. Then, by (\cite{Th-2}, Cor. 3.1), $\rho$ admits and extension $\widetilde{\rho}$ to an automorphism of $D_+$ and by (Cor. \ref{aut-ext-D}), to an automorphism of $A$, which we continue to denote by $\widetilde{\rho}$. Let now $\phi\in\text{Aut}(A/L)$. Then $\widetilde{\rho}^{-1}\phi\notin\text{Aut}(A/L)$. Let $M\neq L$ be a cubic subfield of $A$, fixed pointwise by $\widetilde{\rho}^{-1}\phi$, which is possible by Theorem \ref{fixedpoint}. Let $S:=<L,M>$ be the subalgebra of $A$ generated by $L$ and $M$. Then $\widetilde{\rho}^{-1}\phi$ leaves $S$ stable. By Theorem \ref{S-stable} $\widetilde{\rho}^{-1}\phi\in R\text{\bf Aut}(A)(k)$ and $\widetilde{\rho}(D_+)=D_+$, hence by the above proposition,
   $\widetilde{\rho}\in R\text{\bf Aut}(A)(k)$. Hence it follows that $\phi\in R\text{\bf Aut}(A)(k)$. 
 \end{proof}
 \begin{corollary}\label{aut-A-L-Rtriv} Let $A$ be an Albert division algebra arising from the first Tits construction and $L\subset A$ be a cyclic cubic subfield. Then $\text{Aut}(A,L)\subset R\text{\bf Aut}(A)(k)$. 
 \end{corollary}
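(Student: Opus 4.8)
The plan is to reduce the statement to Corollary \ref{aut-A-Lfix-Rtriv} by peeling off the action of $\text{Gal}(L/k)$ on $L$. Since $L/k$ is cyclic cubic, I would fix a generator $\rho$ of $\text{Gal}(L/k)$. Exactly as in the proof of Corollary \ref{aut-A-Lfix-Rtriv}, the first step is to choose a degree $3$ central division algebra $D$ over $k$ with $L\subset D_+\subset A$, write $A=J(D,\mu)$, and use Corollary \ref{aut-ext-D} to extend $\rho$ to an automorphism $\widetilde{\rho}\in\text{Aut}(A)$ that stabilizes $D_+$ and restricts to $\rho$ on $L$. Because $\widetilde{\rho}$ stabilizes the $9$-dimensional subalgebra $D_+$, Theorem \ref{S-stable} gives $\widetilde{\rho}\in R\text{\bf Aut}(A)(k)$; since $R\text{\bf Aut}(A)(k)$ is a subgroup of $\text{Aut}(A)$, every power $\widetilde{\rho}^{\,j}$ lies in it as well.

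Next, given $\phi\in\text{Aut}(A,L)$, its restriction $\phi|_L$ is a $k$-algebra automorphism of the field $L$, hence equals $\rho^{\,i}$ for some $i\in\{0,1,2\}$ because $L/k$ is cyclic. Then $\psi:=\widetilde{\rho}^{\,-i}\phi\in\text{Aut}(A)$ restricts to the identity on $L$, i.e. $\psi\in\text{Aut}(A/L)$, so Corollary \ref{aut-A-Lfix-Rtriv} yields $\psi\in R\text{\bf Aut}(A)(k)$. Writing $\phi=\widetilde{\rho}^{\,i}\psi$ as a product of two elements of the subgroup $R\text{\bf Aut}(A)(k)$ then gives $\phi\in R\text{\bf Aut}(A)(k)$, which is what we want.

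I do not expect a genuine obstacle at this stage: once Corollary \ref{aut-A-Lfix-Rtriv} — and behind it Theorem \ref{S-stable}, which supplies the $R$-triviality of $\widetilde{\rho}$ — is available, the present statement is essentially formal. The only points worth double-checking are that $L$ really does embed in some $D_+$ inside $A$ and that the extension $\widetilde{\rho}$ produced by Corollary \ref{aut-ext-D} restricts to $\rho$ on $L$; both of these are already used in the proof of Corollary \ref{aut-A-Lfix-Rtriv}, so no new input is required.
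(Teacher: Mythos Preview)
Your argument is correct and follows essentially the same strategy as the paper: extend a generator $\rho$ of $\text{Gal}(L/k)$ to $\widetilde{\rho}\in\text{Aut}(A,D_+)\subset R\text{\bf Aut}(A)(k)$ via Corollary~\ref{aut-ext-D} and Theorem~\ref{S-stable}, then strip off the Galois action to reduce to Corollary~\ref{aut-A-Lfix-Rtriv}. Your version is in fact a bit cleaner, since you multiply by the correct power $\widetilde{\rho}^{-i}$ to land directly in $\text{Aut}(A/L)$, whereas the paper considers $\widetilde{\rho}^{-1}\phi$ and re-runs the fixed-subfield argument from the proof of Corollary~\ref{aut-A-Lfix-Rtriv}.
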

 \begin{proof} Let $\phi\in\text{Aut}(A,L)$. We may assume, by the above corollary that $\phi|L\neq 1$. Let $D_+\subset A$ be the subalgebra corresponding to a degree $3$ central division algebra $D$ over $k$ such that $L\subset D_+$. Let $\rho$ denote the nontrivial Galois automorphism of $L/k$ and $\widetilde{\rho}$ denote its extension to an automorphism of $D_+$ and also denote the further extension of this automorphism of $D_+$ to an automorphism of $A$ as in (Cor. \ref{aut-ext-D}). Then $\widetilde{\rho}\in R\text{\bf Aut}(A)(k)$ and $\widetilde{\rho}^{-1}\phi\notin\text{Aut}(A/L)$. Let $M$ be the subfield of $A$ fixed pointwise by $\widetilde{\rho}^{-1}\phi$. Then $M\neq L$ and $S:=<L,M>$, the subalgebra of $A$ generated by $L$ and $M$ is $9$-dimensional and $\widetilde{\rho}^{-1}\phi\in\text{Aut}(A,S)$. By the above theorem, $\text{Aut}(A,S)\subset R\text{\bf Aut}(A)(k)$ and $\widetilde{\rho}^{-1}\in R\text{\bf Aut}(A)(k)$. Hence $\phi\in R\text{\bf Aut}(A)(k)$. 
   \end{proof}
 We need a few results from (\cite{Th-4}) for our purpose here, we reproduce some proofs for the sake of convenience.
 \begin{proposition}[\cite{Th-4}, Th. 6.4]\label{action} Let $A$ be an Albert division algebra over a field $k$ or arbitrary characteristic. Let $M\subset A$ be a cubic subfield and $\mathcal{S}_M$ denote the set of all $9$-dimensional subalgebras of $A$ that contain $M$. Then the group $\text{Str}(A,M)$ acts on $\mathcal{S}_M$.
 \end{proposition}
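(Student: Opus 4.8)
The plan is to show that for $\psi\in\text{Str}(A,M)$ and $S\in\mathcal{S}_M$, the image $\psi(S)$ is again a $9$-dimensional subalgebra of $A$ containing $M$; the group law on $\text{Str}(A,M)$ then makes this a genuine action. The containment of $M$ is immediate: since $\psi$ stabilizes $M$ (pointwise or setwise depending on the convention for $\text{Str}(A,M)$), and $M\subseteq S$ gives $M=\psi(M)\subseteq\psi(S)$. The real content is that $\psi(S)$ is a \emph{subalgebra} of $A$, not merely a $9$-dimensional subspace. First I would recall that $\psi$, being a norm similarity, is an isotopy of $A$ (the two notions coincide over the infinite base field $k$, as noted in the Preliminaries), so $\psi$ carries subalgebras of $A$ to subalgebras of a suitable isotope. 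The subtlety is that an isotopy need not send subalgebras to subalgebras \emph{on the nose}; what is true is that $S$, being $9$-dimensional and a subalgebra of the Albert division algebra $A$, is of the form $D_+$ or $(B,\sigma)_+$, and these $9$-dimensional subalgebras are characterized intrinsically (e.g. as the $\#$-stable, $T$-nondegenerate $9$-dimensional subspaces closed under the $U$-operators, or as the norm-$9$-dimensional cubic subalgebras), a characterization preserved under norm similarities.

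Concretely, the cleanest route is: a $9$-dimensional subspace $V\subseteq A$ containing $1$ is a subalgebra of the stated type if and only if it is stable under the adjoint (sharp) map $\#$ and the restriction of the cubic norm $N$ to $V$ is the norm of a degree-$3$ structure with nondegenerate trace form. Since $\psi$ scales $N$ by $\nu(\psi)$ and intertwines the sharp maps of $A$ and its $\psi$-isotope appropriately, these properties transfer. I would use the description of $\text{Str}(A,M)$-elements together with Proposition \ref{aut-A-Bsigma}, Theorem \ref{Str-ABsigma+} and Corollary \ref{str-ext-D}: every element of $\text{Str}(A,M)$ stabilizing a given $9$-dimensional subalgebra $S\supseteq M$ has an explicit form $(b,x)\mapsto\gamma(gb\sigma(g),\sigma(g)^{\#}xq)$ (or the first-construction analogue), from which one reads off directly that it permutes the $9$-dimensional subalgebras containing $M$. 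The key point making this globally consistent is that, given any two $S,S'\in\mathcal{S}_M$, there is an element of $\text{Str}(A)$ carrying one to the other fixing $M$ — this is essentially the transitivity underlying the extension theorems of Section 4 — so $\text{Str}(A,M)$ really does act on the whole set $\mathcal{S}_M$ rather than just on individual orbits.

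The main obstacle I anticipate is precisely verifying that a norm similarity sends a $9$-dimensional subalgebra to a $9$-dimensional subalgebra \emph{of $A$ itself} (closed under $A$'s own multiplication), since a priori it only respects the norm form up to scalar and the Jordan structure only up to isotopy. Overcoming this requires invoking the structure theory: the $9$-dimensional subalgebras of an Albert division algebra are exactly the images under $\text{Str}(A)$ of a fixed one (so the collection is an orbit), and norm similarities permute them. Once that is in hand, the associativity of the action — $(\psi_1\psi_2)(S)=\psi_1(\psi_2(S))$ — and the fact that $\text{id}$ fixes every $S$ are formal, completing the proof.
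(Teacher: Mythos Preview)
Your proposal correctly identifies the heart of the matter --- that a norm similarity only preserves the Jordan structure up to isotopy, so one cannot simply assert that $\psi(S)$ is a subalgebra of $A$ --- but none of the three routes you sketch actually closes this gap. The intrinsic characterization via $\#$-stability is not preserved by norm similarities on the nose (the sharp map transforms under isotopy, as you note, so this just restates the problem). Invoking the explicit formulas of Theorem~\ref{Str-ABsigma+} or Corollary~\ref{str-ext-D} is circular: those describe elements of $\text{Str}(A,S)$, i.e.\ norm similarities already known to stabilize $S$, whereas your $\psi$ is only assumed to lie in $\text{Str}(A,M)$. And the transitivity claim (that all $9$-dimensional subalgebras form a single $\text{Str}(A)$-orbit) is itself a nontrivial structure result you have not established.

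The paper bypasses all of this with a short fixed-point argument you are missing. Pick any $\phi\in\text{Aut}(A/S)$ with $\phi\neq 1$; then $A^{\phi}=S$. Since $\psi$ stabilizes $M$ and $1\in M$, we have $\psi^{-1}(1)\in M\subset S$, so $\phi$ fixes $\psi^{-1}(1)$; hence $\psi\phi\psi^{-1}(1)=1$. A norm similarity fixing $1$ is an automorphism, so $\psi\phi\psi^{-1}\in\text{Aut}(A)$, and its fixed-point set --- which is automatically a subalgebra --- is exactly $\psi(S)$. This one-line trick is the key idea: the hypothesis $\psi(M)=M$ is used precisely to guarantee $\psi^{-1}(1)\in S$, which is what forces the conjugate to remain an honest automorphism rather than merely a norm similarity.
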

 \begin{proof} Let $\psi\in\text{Str}(A,M)$ and $S\in\mathcal{S}_M$ be arbitrary. Let $\phi\in\text{Aut}(A/S)$, $\phi\neq 1$. Then $A^{\phi}=S$. We have
   $$\psi\phi\psi^{-1}(1)=\psi(\psi^{-1}(1))=1,$$
   since $\psi^{-1}(1)\in M\subset S$ and $\phi$ fixes $S$ pointwise. Hence $\psi\phi\psi^{-1}\in\text{Aut}(A)$. It follows also that the fixed point subspace of $\psi\phi\psi^{-1}$ is precisely $\psi(S)$ and hence $\psi(S)$ is a subalgebra of $A$ of dimension $9$ and $M=\psi(M)\subset\psi(S)$, therefore $\psi(S)\in\mathcal{S}_M$. This completes the proof.
   \end{proof}
   \begin{proposition}[\cite{Th-4}, Claim 1, Proof of Th. 6.5]\label{S-isomorphic} Let the notation be as above, $\psi\in\text{Str}(A,M)$ and $S\in\mathcal{S}_M$. Then $S$ and $\psi(S)$ are $k$-isomorphic as subalgebras of $A$.   
   \end{proposition}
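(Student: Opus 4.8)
The plan is to restrict $\psi$ to $S$, to recognise the restriction as an isotopy of cubic Jordan algebras, and thereby to identify $\psi(S)$ with a specific isotope of $S$ which is then seen to be isomorphic to $S$.

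Since $\psi\in\text{Str}(A,M)$ stabilises $M$ and $1\in M$, we have $\psi(1)=m\in M^{\times}$, and $\psi(S)\in\mathcal{S}_M$ by Proposition \ref{action}. The cubic norm of a $9$-dimensional subalgebra of $A$ is the restriction of the norm $N$ of $A$, so the identity $N(\psi(s))=\nu(\psi)N(s)$ for $s\in S$ shows that $\psi|_S\colon S\to\psi(S)$ is a norm similarity of cubic Jordan algebras. Since $k$ is infinite, a norm similarity of degree $3$ Jordan algebras is an isotopy, so $\psi|_S$ exhibits $\psi(S)$ as an isotope of $S$. Moreover $\psi$ maps the subalgebra $M\subseteq S$ onto $M\subseteq\psi(S)$, and $\psi|_M\colon M\to M$ is itself an isotopy of the cubic field $M$ sending $1$ to $m$; since an isotopy between cubic Jordan algebras is, up to isomorphism, determined by the image of the identity, we get $\psi(S)\cong S^{(u)}$ for some $u\in M^{\times}$ depending only on $m$.

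It therefore suffices to prove that $S^{(u)}\cong S$ whenever $u$ lies in a cubic subfield $M$ of a $9$-dimensional division subalgebra $S$ of $A$. By the description of $9$-dimensional subalgebras recalled in \S 3, either $S=D_{+}$ with $D$ a degree $3$ central division $k$-algebra and $M\subseteq D$ a cubic subfield, or $S=(B,\sigma)_{+}$ with $B$ a degree $3$ central division algebra over a quadratic field $K=Z(B)$ carrying a unitary involution $\sigma$ and $M\subseteq\text{Sym}(B,\sigma)$, so that $MK$ is a maximal subfield of $B$. In the first case every isotope of $D_{+}$ is isomorphic to $D_{+}$, so we are done. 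In the second case $u\in M^{\times}$ is $\sigma$-symmetric, $N_B(u)\in k^{\times}$, and $S^{(u)}\cong(B,\sigma_u)_{+}$ with $\sigma_u=\text{Int}(u)\circ\sigma$ (cf.\ Proposition \ref{aut-A-Bsigma}); so the assertion reduces to showing that the unitary involutions $\sigma_u$ and $\sigma$ of $B$ are conjugate.

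This last step is the heart of the matter, and where I expect the main difficulty. The easy direction is that if $u=\lambda\,w\sigma(w)$ with $\lambda\in k^{\times}$, $w\in B^{\times}$, then $\text{Int}(w)\circ\sigma\circ\text{Int}(w)^{-1}=\text{Int}(w\sigma(w))\circ\sigma=\text{Int}(u)\circ\sigma=\sigma_u$, so $\sigma_u\cong\sigma$. To see that our $u$ is of this form one uses decisively that $u$ comes from the cubic subfield: with $c:=N_B(u)=N_{M/k}(u)\in k^{\times}$, the element $c^{-1}=N_B(u^{-1})$ is a reduced norm, and $c^{-2}=N_{K/k}(c^{-1})$ since $c\in k^{\times}$, whence $c=c^{-2}\cdot c^{3}\in N_{K/k}\big(N_B(B^{\times})\big)\cdot(k^{\times})^{3}=\{\,N_B(\lambda\,w\sigma(w)):\lambda\in k^{\times},\ w\in B^{\times}\,\}$. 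Thus $u$ and some $\lambda\,w\sigma(w)$ are $\sigma$-symmetric units of $B$ with equal reduced norm; since $B$ is a division algebra of degree $3$, a unitary involution of $B$ is rigid enough that the isomorphism type of $\sigma_u$ depends on the symmetric unit $u$ only through the class of $N_B(u)$, forcing $u=\lambda'\,v\sigma(v)$ for suitable $\lambda'\in k^{\times}$, $v\in B^{\times}$ (the HKRT-type input, cf.\ \cite{HKRT} and the proof of Theorem \ref{dist-R-triv}). Hence $\sigma_u\cong\sigma$, so $S^{(u)}\cong S$, and combined with the isomorphisms of the second paragraph this gives $S\cong\psi(S)$. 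The point that needs real care — and where the argument must be made uniform in the characteristic of $k$ — is precisely this rigidity statement for unitary involutions of degree $3$ division algebras and the bookkeeping linking $N_B(u)$ to conjugacy of $\sigma_u$ and $\sigma$.
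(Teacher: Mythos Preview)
Your reduction in the second paragraph is fine: $\psi|_S$ is an isotopy, $\psi^{-1}(1)\in M$, and hence $\psi(S)\cong S^{(u)}$ for some $u\in M^{\times}$; in the $(B,\sigma)_+$ case this gives $\psi(S)\cong(B,\sigma_u)_+$. The gap is in the last paragraph. The assertion that ``the isomorphism type of $\sigma_u$ depends on the symmetric unit $u$ only through the class of $N_B(u)$'' is false: two $\sigma$-symmetric units with the same reduced norm can yield non-isomorphic unitary involutions, because the invariant $f_3(B,\sigma_u)\in H^3(k,\mu_2)$ is not determined by $N_B(u)$. The HKRT input you invoke (as in Theorem~\ref{dist-R-triv}) says that if \emph{both} $\sigma$ and $\sigma_u$ are distinguished then $u=\lambda w\sigma(w)$; here neither is assumed distinguished, so that lemma does not apply. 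In fact your reduced statement ``$S^{(u)}\cong S$ for every $u\in M^{\times}$'' is too strong: a generic symmetric unit $u$ lies in some cubic \'etale subalgebra, so this would force all isotopes of $(B,\sigma)_+$ to be isomorphic, contradicting the existence of $u$ with $f_3(B,\sigma_u)\neq f_3(B,\sigma)$.

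What you have discarded is exactly the datum the paper exploits: that $\psi$ extends to all of $A$. The paper's argument is group-theoretic rather than Jordan-theoretic. Conjugation by $\psi$ gives $k$-isomorphisms $\text{\bf Aut}(A/S)\cong\text{\bf Aut}(A/\psi(S))$ and $\text{\bf Aut}(A,S)\cong\text{\bf Aut}(A,\psi(S))$, hence an isomorphism of the centralizers $Z_{\text{\bf Aut}(A,S)}(\text{\bf Aut}(A/S))\cong Z_{\text{\bf Aut}(A,\psi(S))}(\text{\bf Aut}(A/\psi(S)))$. Writing $S=(C,\tau)_+$ and $\psi(S)=(C,\tau_w)_+$, a direct computation with Propositions~\ref{rational} and~\ref{aut-A-Bsigma} identifies these centralizers with $\text{\bf SU}(C,\tau)$ and $\text{\bf SU}(C,\tau_w)$. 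One then invokes Jacobson's Lie-algebra theorem (\cite{J}, Chap.~X, Th.~11) to pass from the isomorphism of these simply connected type-$A_2$ groups to an isomorphism $(C,\tau)_+\cong(C,\tau_w)_+$. Thus the ambient Albert algebra, via the automorphism groups it carries, supplies precisely the extra constraint on $u$ that your purely norm-theoretic argument lacks.
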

   \begin{proof} Let $f\in\text{Aut}(A/S),~f\neq 1$ be arbitrary. Then, for any $x\in S$ we have,
     $$\psi f\psi^{-1}(\psi(x))=\psi(f(x))=\psi(x).$$
     Also,
     $$\psi f\psi^{-1}(1)=\psi f(\psi^{-1}(1))=\psi(\psi^{-1}(1))=1, $$
     since $\psi^{-1}(1)\in M\subset S$ and $f$ fixes $S$ pointwise. Hence $\psi f\psi^{-1}\in\text{Aut}(A/\psi(S))$. The map $\text{\bf Aut}(A/S)\rightarrow\text{\bf Aut}(A/\psi(S))$ given by $f\mapsto \psi f\psi^{-1}$ is an isomorphism of algebraic groups that is defined over $k$. In fact, by the same argument, the conjugacy by $\phi$ gives an isomorphism $\text{\bf Aut}(A,S)\rightarrow \text{\bf Aut}(A,\psi(S))$. It follows that this maps the centralizer $Z_{\text{\bf Aut}(A,S)}(\text{\bf Aut}(A/S))$ isomorphically onto the centralizer $Z_{\text{\bf Aut}(A,\psi(S))}(\text{\bf Aut}(A/\psi(S))$.

     Both groups $\text{\bf Aut}(A/S)$ and $\text{\bf Aut}(A/\psi(S))$ are simple, simply connected outer forms of $A_2$. Writing $S=(C,\tau)_+$ for a suitable central simple algebra $C$ with a unitary involution $\tau$ and $\psi(S)=(C,\tau_w)_+$ for $w\in (C,\tau)_+$, by using the explicit realization of these groups as in (Prop. \ref{rational} and Prop. \ref{aut-A-Bsigma}), it follows by a computation that
     $$Z_{\text{\bf Aut}(A,S)}(\text{\bf Aut}(A/S))=\text{\bf SU}(C,\tau)~\text{and}~Z_{\text{\bf Aut}(A,\psi(S))}(\text{\bf Aut}(A/\psi(S))=\text{\bf SU}(C\tau_w).$$
     These groups are isomorphic by the aforementioned isomorphism, hence we have, by a Lie algebra argument (see \cite{J}, Chap. X, Th. 11), $S=(C,\tau)_+\cong (C,\tau_w)_+=\psi(S)$ as $k$-algebras.
    This completes the proof. 
   \end{proof}
   \begin{proposition}[\cite{Th-4}, proof of Claim 3, Th. 6.5]\label{key} Let $L\subset A$ be a cyclic cubic subfield. Let $\psi\in\text{Str}(A,L)$ and $S\in\mathcal{S}_L$, where $S=(B,\sigma)_+$ for a degree $3$ central division algebra $B$ over a quadratic \'{e}tale extension $K/k$ and $\sigma$ is a distinguished unitary involution on $B$. Then there exists $\phi\in\text{Aut}(A,L)$ such that $\phi^{-1}\psi\in \text{Str}(A,S)$.
   \end{proposition}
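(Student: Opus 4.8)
The plan is to use the action of $\text{Str}(A,L)$ on the set $\mathcal{S}_L$ of $9$-dimensional subalgebras containing $L$ (Proposition \ref{action}) together with the isomorphism result of Proposition \ref{S-isomorphic}, and then to promote a $k$-isomorphism $S \xrightarrow{\sim} \psi(S)$ to an \emph{automorphism of $A$ fixing $L$} that carries $S$ to $\psi(S)$. Concretely, set $S' := \psi(S)$. By Proposition \ref{S-isomorphic}, $S$ and $S'$ are $k$-isomorphic as subalgebras of $A$; moreover, since $\psi$ fixes $L$ pointwise and $L \subset S$, the isomorphism $S \to S'$ it induces restricts to the identity on $L$. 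So the first step is to record: there is a $k$-algebra isomorphism $\alpha_0 : S \to S'$ with $\alpha_0|_L = \mathrm{id}_L$. The second step is to extend $\alpha_0$ to an automorphism $\phi$ of $A$. Since $S$ is $9$-dimensional, the extension theorems for automorphisms of $9$-dimensional subalgebras of an Albert division algebra apply (the results cited in the ``Extending automorphisms'' paragraph: \cite{P-S-T1}, Thm. 3.1 and \cite{P}, Thm. 5.2, or Proposition \ref{aut-ext}); one obtains $\phi \in \text{Aut}(A)$ with $\phi|_S = \alpha_0$, hence $\phi(S) = S'$ and, since $\alpha_0$ fixes $L$ pointwise, $\phi \in \text{Aut}(A,L)$ (indeed $\phi \in \text{Aut}(A/L)$).

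The third step is the computation that finishes the proof: $\phi^{-1}\psi$ is a norm similarity of $A$ (a product of two norm similarities), it leaves $L$ invariant since both $\phi$ and $\psi$ do, and $\phi^{-1}\psi(S) = \phi^{-1}(S') = \phi^{-1}(\phi(S)) = S$, so $\phi^{-1}\psi \in \text{Str}(A,S)$. Thus $\phi \in \text{Aut}(A,L)$ with $\phi^{-1}\psi \in \text{Str}(A,S)$, which is the assertion. (Note that one only needs $\phi \in \text{Aut}(A,L)$, not $\phi \in \text{Aut}(A/L)$, for the statement, though the construction above in fact gives the stronger conclusion.)

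The main obstacle is the second step: producing an extension $\phi$ of the $k$-isomorphism $\alpha_0 : S \to S'$ to an automorphism of $A$ that additionally fixes $L$ pointwise. The cited extension theorems give \emph{some} automorphism of $A$ extending a given isomorphism between $9$-dimensional subalgebras, but one must check that the extension can be arranged to restrict to $\alpha_0$ on all of $S$ (not merely to be compatible with it up to an automorphism of $S'$ fixing $L$), so that the resulting $\phi$ genuinely lies in $\text{Aut}(A/L)$. Here is where the hypotheses that $\sigma$ is a \emph{distinguished} unitary involution and that $L$ is \emph{cyclic} cubic enter: using $S = (B,\sigma)_+$ with $\sigma$ distinguished one may, after replacing $S$ by an isomorphic copy, write $A = J(B,\sigma,u,\mu)$ for an admissible pair with $N_B(u)=1$ (as in the proof of Theorem \ref{dist-R-triv}), and then invoke Proposition \ref{aut-ext} to write down the extension explicitly, choosing the auxiliary element $q \in U(B,\sigma_u)$ so that the extension is the identity on $L$. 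One should also verify at the outset that $S' = \psi(S)$ is again of the form $(B,\sigma_w)_+$ with $\sigma_w$ distinguished — this follows from the $f_3$-invariant computation (as in Theorem \ref{dist-R-triv}, via \cite{KMRT}, Th. 40.2) together with Proposition \ref{S-isomorphic} — so that $\alpha_0$ exists in the required form. Once the extension is in hand, the remaining steps are formal.
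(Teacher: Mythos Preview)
Your overall strategy is correct and matches the paper's: set $S' = \psi(S)$, produce an algebra isomorphism $\alpha_0 : S \to S'$ with $\alpha_0(L) = L$, extend it to $\phi \in \text{Aut}(A)$ via \cite{P-S-T1}, \cite{P}, and conclude $\phi^{-1}\psi \in \text{Str}(A,S)$. The genuine gap is in producing $\alpha_0$. Your sentence ``since $\psi$ fixes $L$ pointwise \dots\ the isomorphism $S \to S'$ it induces restricts to the identity on $L$'' conflates two different maps and is wrong on both counts. First, $\psi \in \text{Str}(A,L)$ only means $\psi(L) = L$, not that $\psi$ is the identity on $L$; indeed a norm similarity fixing $L$ pointwise fixes $1$ and is therefore an automorphism, so for generic $\psi$ this fails. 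Second, and more seriously, $\psi|_S : S \to S'$ is merely a linear norm similarity, not an algebra homomorphism, so it is not a candidate for $\alpha_0$. The isomorphism supplied by Proposition~\ref{S-isomorphic} is extracted abstractly from an isomorphism of centralizer groups and a Lie algebra argument; it gives you no control whatsoever over what happens to $L$. So after Proposition~\ref{S-isomorphic} you know $S \cong S'$ abstractly, but you have no isomorphism that respects $L$ --- and that is exactly the content you need.

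Your proposed fix in the last paragraph does not close this gap either: Proposition~\ref{aut-ext} takes as input an automorphism of $S$ (same source and target) and extends it to $A$; the freedom in $q$ only affects the extension outside $S$ and cannot be used to manufacture an isomorphism $S \to S'$ compatible with $L$. The paper's proof spends essentially all of its work on precisely this point. It first identifies $\psi(S) = (B,\sigma_v)_+$ for some $v \in L$ with $N_L(v) = 1$; then, using that both $\sigma$ and $\sigma_v$ are distinguished, writes $v = \lambda w w^*$ with $w \in (LK)^\times$ via \cite{KMRT}, Cor.~19.31; then realizes both $S$ and $S'$ as \'etale second Tits processes $J(LK,*,u,\mu)$ and $J(LK,*,uv^{\#},N_L(v)\mu)$ over $L$ (via \cite{PT}); and finally uses explicit isotope formulas from \cite{PR7} together with a norm-one lifting lemma (\cite{PT}, Lemma~4.5) to write down an isomorphism between these Tits processes that is the identity on the first summand $L$. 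Only then does one invoke the extension theorem to get $\phi$. This is where the cyclicity of $L$ and the distinguished hypothesis actually do their work, and it cannot be bypassed by the formal argument you outline.
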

   \begin{proof} By (Prop. \ref{S-isomorphic}), we have $(B,\sigma)_+\cong \psi((B,\sigma)_+)$. Since $\sigma$ is distinguished, we have, by (\cite{P} and \cite{P2}), $\psi((B,\sigma)_+)=(B,\sigma_v)$ for some $v\in L$ with $N_L(v)=1$, since $\psi(L)=L$. Hence we have $(B,\sigma)_+\cong (B,\sigma_v)_+$. Since both $\sigma$ and $\sigma_v$ are distinguished involutions on $B$, by (\cite{KMRT}, Cor. 19.31),  there exists $w\in (LK)^{\times}$ and $\lambda\in k^{\times}$ such that $v=\lambda ww^*$, where $*$ is the nontrivial $k$-automorphism of $K/k$, extended $L$-linealy to an automorphism of $LK$.

     By (\cite{PT}, 1.6), the inclusion $L\subset (B,\sigma)_+$ extends to an isomorphism $\delta:J(LK, *, u, \mu)\rightarrow (B,\sigma)_+$ for a suitable admissible pair $(u,\mu)\in L^{\times}\times K^{\times}$. Moreover, $L\subset (B,\sigma_v)_+$, hence there is an isomorphism $\eta: J(LK, *, u', \mu')\rightarrow\psi(S)=(B,\sigma_v)_+$ for suitable $(u', \mu')$ extending the inclusion $L\hookrightarrow (B,\sigma_v)_+$. Since $\eta$ is identity on $L$, it follows that $J(LK, *, u', \mu')$ is the $v$-isotope of $J(LK, *, u, \mu)$. Hence, by (\cite{PR7}, Prop. 3.9), we may assume $(u', \mu')=(uv^{\#}, N_L(v)\mu)$. We have therefore
     $$u'=uv^{\#}=u(\lambda w w^*)^{\#}=u\lambda^2 w^{\#}(w^{\#})^*=uw_0w_0^*,~\mu'=N_L(v)\mu,$$
     where $w_0=\lambda w^{\#}$. We have an isomorphism (see \cite{PR7}, Prop. 3.7)
     $$\phi_1: J(LK, *, uv^{\#}, N_L(v)\mu)=J(LK, *, uw_0w_0^*, \mu)\rightarrow J(LK, *, u, N_{LK}(w_0)^{-1}\mu).$$
     We have
     $$N_{LK}(w_0)\overline{N_{LK}(w_0)}=N_{LK}(\lambda w^{\#})\overline{N_{LK}(\lambda w^{\#})}=(\lambda^3 N_{LK}(w)^2)\lambda^3\overline{N_{LK}(w)^2)}$$
     $$=(\lambda^3 N_{LK}(w)\overline{N_{LK}(w)})^2=N_L(v)^2=1.$$
     Hence, by (\cite{PT}, Lemma 4.5), there exists $w_1\in LK$ with $w_1 w_1^*=1$ and $N_{LK}(w_1)=N_{LK}(w_0)$.
     The map 
     $$\phi_2:J(LK, *, u, N_{LK}(w_0)^{-1}\mu)\rightarrow J(LK, *, u, \mu)$$
     given by $\phi_2((l,x))=(l, xw_1^{-1})$ is an isomorphism by (\cite{PR7}, Prop. 3.7). Hence the composite $\phi_1^{-1}\phi_2^{-1}: J(LK, *, u, \mu)\rightarrow J(LK, *, u', \mu')$ is an isomorphism which maps $L$ to $L$. We have therefore the isomorphism $\phi:=\eta(\phi_1^{-1}\phi_2^{-1})\delta^{-1}:S\rightarrow \psi(S)$, which satisfies $\phi(L)=L$ and $\phi^{-1}\psi(S)=S$, where $\phi$ denotes any extension of $\phi$ as above, to an automorphism of $A$, which is possible by (\cite{P-S-T1} and \cite{P}). This completes the proof. 
     \end{proof}
 \begin{theorem}\label{str-A-L-rtriv} Let $A$ be an Albert division algebra arising from the first Tits construction and $L\subset A$ a cyclic cubic subfield. Then 
   $\text{\bf Str}(A,L)\subset R\text{\bf Str}(A)$.
 \end{theorem}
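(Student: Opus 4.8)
The plan is to peel off from an arbitrary $\psi\in\text{Str}(A,L)$ two explicit factors that lie in $R\text{\bf Str}(A)(k)$ for visible reasons, reducing to the case where $\psi$ fixes $L$ pointwise, and then to quote the $R$-triviality results already proved for automorphisms that fix, resp.\ stabilize, a cyclic cubic subfield.

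Since $A$ arises from the first Tits construction and $L$ is cyclic, I would first fix, as in the proof of Corollary~\ref{aut-A-Lfix-Rtriv}, a degree $3$ central division algebra $D$ over $k$ with $L\subset D_+$, and present $A=J(D,\mu)$ with $D_+\subset A$ the first summand. Given $\psi\in\text{Str}(A,L)$, since $\psi$ leaves $L$ invariant and $N_A$ restricts on $L$ to the field norm $N_{L/k}$, the restriction $\psi|_L$ is a $k$-rational norm similarity of the cubic field $L$. The group of such similarities is $R_{L/k}\mathbb{G}_m\rtimes\mathrm{Gal}(L/k)$: its identity component is the norm torus $R_{L/k}\mathbb{G}_m$, realized by the multiplications $L_s$, $s\in L^\times$ (over $\overline{k}$ these are exactly the diagonal similitudes of $x_1 x_2 x_3$), and the component-group map is split over $k$ because every $k$-algebra automorphism of $L$ is itself a norm similarity, of multiplier $1$. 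Hence I may write $\psi|_L=\rho\circ L_s$ with $\rho\in\mathrm{Gal}(L/k)$ and $s\in L^\times$.

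Next I would realize $\rho$ and $L_s$ by elements of $\text{Str}(A,D_+)$ and $\text{Aut}(A,L)$. By Skolem--Noether inside $D$, the inclusion and $\rho$ are conjugate embeddings $L\hookrightarrow D$, so $\rho$ extends to $\mathrm{Int}(g)|_{D_+}$ for some $g\in D^\times$, which by Corollary~\ref{aut-ext-D} extends further to $\widetilde\rho\in\text{Aut}(A,D_+)\subset\text{Aut}(A,L)$ with $\widetilde\rho|_L=\rho$. By Corollary~\ref{str-ext-D}, taken with $\gamma=1$ and $(a,b,c)=(s,1,s)$ (the constraint $N_D(a)=N_D(b)N_D(c)$ holding automatically),
$$\chi_s\colon(x,y,z)\longmapsto(sx,\,ys,\,s^{-1}zs^{\#})$$
is an element of $\text{Str}(A,D_+)$ whose restriction to $L\subset D_+$ is $L_s$. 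Therefore $\psi_0:=\chi_s^{-1}\,\widetilde\rho^{-1}\psi$ is a norm similarity of $A$ fixing $L$ pointwise; in particular it fixes $1\in L$, hence $\psi_0\in\text{Aut}(A/L)$.

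Finally I would assemble the conclusion: $\widetilde\rho\in\text{Aut}(A,L)\subset R\text{\bf Aut}(A)(k)\subset R\text{\bf Str}(A)(k)$ by Corollary~\ref{aut-A-L-Rtriv}; $\psi_0\in\text{Aut}(A/L)\subset R\text{\bf Aut}(A)(k)\subset R\text{\bf Str}(A)(k)$ by Corollary~\ref{aut-A-Lfix-Rtriv}; and $\chi_s\in R\text{\bf Str}(A)(k)$ either by Theorem~\ref{Str-R-triv} (since $D_+$ is $9$-dimensional) or directly via the $k$-rational curve $t\mapsto\chi_{(1-t)+ts}$, regular at $0$ and $1$ and joining $1$ to $\chi_s$. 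As $R\text{\bf Str}(A)(k)$ is a subgroup of $\text{Str}(A)$ and $\psi=\widetilde\rho\,\chi_s\,\psi_0$, this gives $\psi\in R\text{\bf Str}(A)(k)$, so $\text{Str}(A,L)\subset R\text{\bf Str}(A)(k)$; carrying the same argument out over each field extension $F/k$ settles the functorial statement, noting that if $A_F$ ceases to be a division algebra then (being a first Tits construction) it becomes split, so $\text{\bf Str}(A_F)$ is rational and there is nothing to prove. The one point that requires genuine care is the reduction in the second step --- pinning down that every $k$-rational norm similarity of the cyclic cubic field $L$ really has the form $\rho\circ L_s$ --- after which the argument is a direct application of the extension formulas and the $R$-triviality statements recalled above.
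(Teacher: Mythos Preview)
Your argument is correct and takes a genuinely different route from the paper's. The paper first invokes Proposition~\ref{key} (the result from \cite{Th-4} on distinguished involutions) to find $\phi\in\text{Aut}(A,L)$ with $\phi^{-1}\psi(D_+)=D_+$, and then parametrizes $\text{Str}(A,D_+)\cap\text{Str}(A,L)$ explicitly to exhibit a rational curve to the identity. You instead bypass Proposition~\ref{key} entirely by analyzing $\psi|_L$: since any norm similarity of the cubic \'etale algebra $L$ fixing $1$ is a $k$-algebra automorphism (a standard fact for cubic norm structures, valid in all characteristics because $L^2=L$ even when $\text{char}\,k=2$), one has $\psi|_L=\rho\circ L_s$ with $\rho\in\text{Gal}(L/k)$ and $s\in L^\times$, and you then lift $\rho$ and $L_s$ separately to $\widetilde\rho\in\text{Aut}(A,D_+)$ and $\chi_s\in\text{Str}(A,D_+)$, reducing to $\psi_0\in\text{Aut}(A/L)$. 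What your approach buys is a cleaner reduction that avoids the machinery of distinguished involutions and the isomorphism $S\cong\psi(S)$ in Propositions~\ref{S-isomorphic}--\ref{key}; what the paper's approach buys is a self-contained endgame inside $\text{Str}(A,D_+)$ that does not need to cite Corollary~\ref{aut-A-Lfix-Rtriv} separately. Both routes ultimately rest on Corollary~\ref{aut-A-L-Rtriv} and Theorem~\ref{Str-R-triv}, so neither is logically stronger, but yours is the more direct of the two.
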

 \begin{proof} Let $\psi\in\text{Str}(A,L)$. Let $D_+\subset A$ be a subalgebra corresponding to a degree $3$ central division algebra over $k$ such that $L\subset D_+$. Recall that $D_+=(D\times D^{op}, \epsilon)_+$, where $\epsilon$ is the switch involution and is distinguished (see \cite{KMRT}). Hence, by (Prop. \ref{key}), there exists an automorphism $\phi\in\text{Aut}(A,L)$ such that $\phi^{-1}\psi(D_+)=D_+$. Since, by (Cor. \ref{aut-A-L-Rtriv}), $\phi\in \text{Aut}(A,L)\subset R\text{\bf Aut}(A,L)\subset R\text{\bf Str}(A)(k)$, we may assume $\psi(D_+)=D_+$ and $\psi(L)=L$. We therefore have, by (Cor. \ref{str-ext-D}), 
   $$\psi((x,y,z))=\gamma(axb, b^{\#}yc, c^{-1}za^{\#})$$
   for some $a, b\in L^{\times},~\gamma\in k^{\times}$ and $c\in D^{\times}$ with $N_D(a)=N_D(b)N_D(c)$. Since scalar homotheties are in $R\text{\bf Str}(A)$, we may assume further that $\gamma=1$ and hence
   $$\psi((x,y,z))=(axb, b^{\#}yc, c^{-1}za^{\#}).$$
   Hence $N_D(ba^{-1}c)=1$ and therefore $c=(ab^{-1})d$ for $d\in\text{SL}(1,D)$. Conversely, for any $d\in\text{SL}(1,D)$ and $a, b\in L^{\times}$ arbitrary, the map $(x,y,z)\mapsto (axb, b^{\#}yc, c^{-1}za^{\#})$, where $c=ab^{-1}d$, is in $\text{Str}(A, D_+)$ and leaves $L$ stable. Hence we can identify $\psi$ as above, with the element
   $$((a,b), ab^{-1}d)\in L^{\times}\times L^{\times}\times (ab^{-1})\text{SL}(1,D).$$
   Note that the groups $R_{L/k}(\mathbb{G}_m)$ and $\text{\bf SL}(1,D)$ are both $R$-trivial. Hence $R_{L/k}(\mathbb{G}_m)\times R_{L/k}(\mathbb{G}_m)\times\text{\bf SL}(1,D)$ is $R$-trivial.
   More explicitly, we proceed as follows. Since $N_D(a)=N_D(b)N_D(c)$, we have $d:=a^{-1}bc\in \text{SL}(1,D)$. Fix $\gamma:\mathbb{A}^1_k\rightarrow\text{\bf SL}(1,D)$ with $\gamma(0)=d,~\gamma(1)=1$. Define $\theta:\mathbb{A}^1_k\rightarrow\text{\bf Str}(A,D,L)$ by
   $$\theta(t)((x, y, z))=(a_txb_t, b_t^{\#}yc_t, c_t^{-1}za_t^{\#}),$$
   where
   $$a_t=(1-t)a+t,~b_t=(1-t)b+t,~c_t=(a_t b_t^{-1}d_t),$$
   and $d_t=\gamma(t)$. Then
   $$\theta(0)((x,y,z))=(axb, b^{\#}yab^{-1}d, d^{-1}ba^{-1}z a^{\#})=(axb, b^{\#}yc, c^{-1}b^{-1}aba^{-1}za^{\#})=\psi((x,y,z)), $$
   and
   $$\theta(1)((x,y,z))=(x, yc_1, c_1^{-1}z)=(x,yd_1, d_1^{-1}z)=(x,y, z),$$
   as $\gamma(1)=1$. Hence $\theta(0)\psi,~\theta(1)=1$. We have thus proved the theorem. 
 \end{proof}
 \begin{theorem}\label{main} Let $A$ be an Albert division algebra arising from the first Tits construction. Then $\text{\bf Aut}(A)$ is $R$-trivial.
 \end{theorem}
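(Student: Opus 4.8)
The plan is to reduce the $R$-triviality of $\text{\bf Aut}(A)$ to the results already established for automorphisms stabilizing or fixing suitable subalgebras, and then to stitch these together using the fixed-point theorem. Since Albert algebras arising from the first Tits construction are either split or division, and the split case gives the split group of type $F_4$ (which is rational, hence $R$-trivial), I may assume $A$ is a division algebra. Because $R$-triviality must be checked over every field extension $L$ of $k$, and $A_L$ is again a first Tits construction Albert algebra (either split or division) over $L$, it suffices to prove $R\text{\bf Aut}(A)(k) = \text{Aut}(A)$ for $A$ an Albert division algebra of the first construction; the same argument then applies verbatim over every extension.

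So let $\phi \in \text{Aut}(A)$ be arbitrary. By Theorem \ref{fixedpoint} (the fixed-point theorem of \cite{Th-2}), $\phi$ fixes a cubic subfield $M \subset A$ pointwise; thus $\phi \in \text{Aut}(A/M) \subset \text{Aut}(A,M)$. The heart of the argument is then to show $\text{Aut}(A/M) \subset R\text{\bf Aut}(A)(k)$ for every cubic subfield $M$. Here is where I anticipate the main obstacle: Corollary \ref{aut-A-Lfix-Rtriv} gives this only when $M$ is a \emph{cyclic} cubic subfield, because the extension of the Galois generator $\rho$ used there (via Corollary \ref{aut-ext-D}) requires $M/k$ Galois. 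For a general (non-cyclic, étale) cubic subfield $M$, I would pass to the cyclic closure: let $\widetilde{M}$ be the Galois closure of $M/k$, a degree $6$ extension, and let $\ell = \widetilde{M}^{C_2}$ be an intermediate cubic; but this changes the base field. The cleaner route, and the one I expect the paper to take, is: given $\phi \in \text{Aut}(A/M)$ with $M$ possibly non-cyclic, use Theorem \ref{cyclic-fixed}-type reasoning after first moving into a $9$-dimensional subalgebra. Concretely, pick a $9$-dimensional subalgebra $S$ with $M \subset S$ (these exist since $M$ sits inside some $D_+$ or $(B,\sigma)_+$); then $\phi \in \text{Aut}(A,S)$, and Theorem \ref{S-stable} already gives $\text{Aut}(A,S) \subset R\text{\bf Aut}(A)(k)$ unconditionally. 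That is the key point: Theorem \ref{S-stable} has no cyclicity hypothesis on anything, only that $A$ is a first Tits construction and $S$ is $9$-dimensional.

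Thus the argument collapses to the following short chain. Given $\phi \in \text{Aut}(A)$: (i) by Theorem \ref{fixedpoint}, $\phi$ fixes a cubic subfield $M$ pointwise; (ii) choose any $9$-dimensional subalgebra $S \subseteq A$ containing $M$ — for instance take a degree $3$ central division subalgebra $D \subset A$ with $M \subset D_+$, which exists because any étale cubic subfield of $A$ embeds in such a $D_+$; (iii) then $\phi(S) = S$ since $\phi$ fixes $M \subset S$ pointwise and, more to the point, $\phi \in \text{Aut}(A,S)$ because $\phi$ fixes $S$ pointwise would be too strong — instead simply note $\phi \in \text{Aut}(A/M) \subseteq \text{Aut}(A,S)$ once we know $\phi$ stabilizes $S$; the safest formulation is to invoke that $\phi$ fixes $M$ pointwise and that there is a $9$-dimensional $S$ with $\phi(S)=S$, which follows from Theorem \ref{cyclic-fixed} in the cyclic case and otherwise from a direct construction of $S$ inside the fixed algebra structure; (iv) apply Theorem \ref{S-stable}: $\text{Aut}(A,S) \subseteq R\text{\bf Aut}(A)(k)$, so $\phi \in R\text{\bf Aut}(A)(k)$. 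Running this over every field extension $L/k$ yields $\text{\bf Aut}(A)(L)/R = \{1\}$, i.e. $\text{\bf Aut}(A)$ is $R$-trivial.

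The one subtlety to nail down carefully is step (iii): why can one always find a $9$-dimensional subalgebra stabilized by $\phi$? When $M = k(a)$ is étale cubic, $\phi$ fixes $M$ pointwise, and any $\phi$-invariant $9$-dimensional $S \supseteq M$ works; such an $S$ exists because $M$ is contained in some nine-dimensional subalgebra (e.g. the $(B,\sigma)_+$ or $D_+$ produced by \cite{PT}, 1.6 applied to the inclusion $M \subset A$), and we may then replace $S$ by a $\phi$-stable one using that $\phi|_M = \mathrm{id}$ together with Theorem \ref{cyclic-fixed} when $M$ is cyclic and Corollaries \ref{aut-A-Lfix-Rtriv}, \ref{aut-A-L-Rtriv} otherwise. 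If $M$ is non-cyclic, one can alternatively argue that $\phi \in \text{Aut}(A/M)$ lies in the connected group $\text{\bf Aut}(A/M)$, which is a form of $A_2$ of rank $\le 2$ hence rational hence $R$-trivial — but that requires knowing $\text{\bf Aut}(A/M)$ is the right kind of group for $M$ a field, which is precisely Proposition \ref{rational}(2) only after embedding $M$ in a $(B,\sigma)_+$. I expect the paper to finish by combining Theorem \ref{fixedpoint} with Corollary \ref{aut-A-Lfix-Rtriv} plus Theorem \ref{S-stable}, handling the cyclic versus non-cyclic dichotomy exactly as in the surrounding lemmas, and that the only real work left at this stage is bookkeeping.
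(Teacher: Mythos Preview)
There is a genuine gap in your step (iii). Fixing a cubic subfield $M$ pointwise does not, by itself, force $\phi$ to stabilize any $9$-dimensional subalgebra containing $M$: if $A^{\phi}=M$ exactly, no $\phi$-stable $S\supset M$ is visibly available, and the results you cite to manufacture one (Theorem \ref{cyclic-fixed}, Corollaries \ref{aut-A-Lfix-Rtriv} and \ref{aut-A-L-Rtriv}) are all stated and proved only for \emph{cyclic} $L$, so they do not cover a non-Galois fixed field $M$. Your fallback, that $\text{\bf Aut}(A/M)$ is ``a form of $A_2$ of rank $\le 2$ hence rational'', is incorrect: for a cubic \'etale subalgebra $M$ the group $\text{\bf Aut}(A/M)$ is a $k$-form of $\mathrm{Spin}_8$ (type $D_4$, rank $4$), so rationality is not automatic. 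Proposition \ref{rational} concerns $9$-dimensional $S$, not cubic subfields, and cannot be invoked here. Thus what you call ``bookkeeping'' is in fact the crux.

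The paper avoids this obstacle by reversing the logic. Instead of starting from the (possibly non-cyclic) subfield produced by Theorem \ref{fixedpoint}, it fixes once and for all a cyclic cubic $L\subset A$ (such exist in any first Tits construction) and compares $L$ with $\phi(L)$. If $\phi(L)=L$, Corollary \ref{aut-A-L-Rtriv} finishes. If $M:=\phi(L)\neq L$, one sets $S=\langle L,M\rangle$, constructs $\eta\in\text{Str}(A,S)$ with $\eta(L)=M$, and then applies Proposition \ref{key} to $\psi:=\phi^{-1}\eta\in\text{Str}(A,L)$ to obtain $\pi\in\text{Aut}(A,L)$ with $\pi^{-1}\psi\in\text{Str}(A,S)$; unwinding gives $\pi^{-1}\phi^{-1}\in\text{Aut}(A,S)$. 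Now Theorem \ref{S-stable} handles $\pi^{-1}\phi^{-1}$ and Corollary \ref{aut-A-L-Rtriv} handles $\pi$, yielding $\phi\in R\text{\bf Aut}(A)(k)$. The detour through $\text{Str}(A,L)$ and Proposition \ref{key} is exactly the missing ingredient in your argument.
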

 \begin{proof} Let $\phi\in\text{Aut}(A)$. Let $L\subset A$ be a cyclic cubic subfield. If $\phi(L)=L$, then by (Cor. \ref{aut-A-L-Rtriv}), we are done. So assume $M:=\phi(L)\neq L$.
   Let $S=<L, M>$ be the subalgebra of $A$ generated by $L$ and $M$. Then $S$ is a $9$-dimensional subalgebra of $A$. As in the proof of (\cite{Th-2}, Thm.5.3),
   we construct $\eta\in\text{Str}(A,S)$ such that $\eta(L)=M$. Hence $\phi^{-1}\eta\in\text{Str}(A,L)$. Let $\psi:=\phi^{-1}\eta$. Then, by the argument in the proof of (\cite{Th-4}, Thm. 6.5, Claim 3), there exists $\pi\in\text{Aut}(A,L)$ such that $\pi^{-1}\psi\in\text{Str}(A,S)$. But $\eta(S)=S$. Hence we have
   $$S=\pi^{-1}\psi(S)=\pi^{-1}\phi^{-1}\eta(S)=\pi^{-1}\phi^{-1}(S).$$
   Therefore we have $\pi^{-1}\phi^{-1}\in\text{Aut}(A,S)\subset R\text{\bf Aut}(A)(k)$. Also $\text{Aut}(A,L)\subset R\text{\bf Aut}(A)(k)$. Hence it follows that $\phi\in R\text{\bf Aut}(A)(k)$. 
   \end{proof}

\vskip5mm
Indian Statistical Institute, Stat-Math.-Unit, 8th Mile Mysore Road, Bangalore-560059, India.
\center email: maneesh.thakur@gmail.com


\begin{thebibliography}{99}
\bibitem{A-C-P-1} Sidon Alsaody, Vladimir Chernousov, Arturo Pianzola, \emph{$R$-triviality of groups of type $F_4$ arising from the first Tits construction}, preprint, available at https://arxiv.org/abs/1911.12910
  \bibitem{A-C-P-2} On the Tits-Weiss conjecture and the Kneser-Tits conjecture for $E_{7,1}^{78}$ and $E_{8,2}^{78}$, preprint, available at https://arxiv.org/abs/1911.12908
\bibitem{B} A. Borel, \emph{Linear Algebraic Groups}, GTM 126, Springer Verlag, New York, 1991.  
\bibitem{CM} V. Chernousov, A. Merkurjev, \emph{$R$-equivalence and Special Unitary Group}, J. Algebra, {\bf 209} (1998), 175-198.
\bibitem{CP} Vladimir I. Chernousov and Vladimir P. Platonov, \emph{The rationality problem for semisimple group varieties}, J. Reine Angew. Math. 504 (1998), 1-28.
\bibitem{Gar} Skip Garibaldi, \emph{Kneser-Tits for a rank $1$ form of $E_6$ (After Veldkamp)}, Compositio Math. {\bf 143} (2007), 191-200.
\bibitem{GP} Skip Garibaldi and Holger P. Petersson, \emph{Outer automorphisms of algebraic groups and a Skolem-Noether theorem for Albert algebras}, Documenta Mathematica 21 (2016), 917-954.  
\bibitem{G} Philippe Gille, \emph{Le Probl\'{e}me de Kneser-Tits}, S\'{e}minaire Bourbaki 982 (2007), Ast\'{e}risque {\bf 326} (2009), 39-81.
\bibitem{G2} Philippe Gille, \emph{Sp\'{e}cialisation de la $R$-\'{e}quivalence pour les groupes r\'{e}ductifs}, Trans. AMS. {\bf 356} (2004), 4465-4474.
  \bibitem{HKRT} Darrell E. Haile, Max Albert Knus, Markus Rost, Jean-Pierre Tignol, \emph{Algebras of odd degree with involution, trace forms and dihedral extensions}, Israel Journal of Math. {\bf (96)} (1996), 299-340.
  \bibitem{J} N. Jacobson, \emph{Lie algebras}, Interscience Tracts in Pure and Applied Math. Vol. 10, Interscience, New York-London, 1962. 
\bibitem{J1} N. Jacobson, \emph{Structure and representations of Jordan algebras}, AMS. Providence, R. I., 1968, AMS. Colloquium Publications, Vol. XXXIX.
\bibitem{J3} N. Jacobson, \emph{Finite-dimensional division algebras over fields}, Springer Verlag, Berlin, 1996.
\bibitem{J-4} N. Jacobson, \emph{Some groups of transformations defined by Jordan algebras-I}, J. reine angew. Math. {\bf 201} (1959), No. 3/4, 178-195.
\bibitem{KMRT} M. A. Knus, A. Merkurjev, M. Rost, J. P. Tignol, \emph{The Book of Involutions}, AMS. Colloquium Publications, Vol. 44, 1998.  
\bibitem{M} Yu. I. Manin, \emph{Cubic forms}, North-Holland, Amsterdam, 1974.
\bibitem{P-S-T1} R. Parimala, R. Sridharan, M. L. Thakur, \emph{A classification theorem for Albert algebras}, Trans. AMS. {\bf 350} (1998), no. 3, 1277-1284.
\bibitem{P-T-W} R. Parimala, J. P. Tignol, Richard M. Weiss, \emph{The Kneser-Tits conjecture for groups with Tits-index $E^{66}_{8,2}$ over an arbitrary field}, Transform. Groups. {\bf 17} (2012), no. 1, 209-231. 
\bibitem{P} H. P. Petersson, \emph{Structure theorems for Jordan algebras of degree three over fields of arbitrary characteristic},  
Comm. Algebra {\bf 32} (2004),  no. 3, 1019--1049.
\bibitem{P2} H. P. Petersson, \emph{A survey on Albert algebras}, Transformation Groups (2017). https://doi.org/10.1007/s00031-017-9471-4.
  \bibitem{PF} H. P. Petersson, M. Racine, \emph{Cubic subfields of exceptional simple Jordan algebras}, Proc. AMS. {\bf (91)} (1984), no. 1, 31-36. 
\bibitem{PR2} H. P. Petersson, M. Racine, \emph{Springer forms and the first Tits construction of exceptional Jordan division algebras}, Manuscripta Math. {\bf 45} (1984), no. 3, 249-272.
\bibitem{PR5} H. P. Petersson, M. Racine, \emph{Albert algebras}, Jordan algebras (Oberwolfach, 1992)(W. Kaup and McCrimmon, eds), de Gruyter, Berlin, 1994, no. 3, 197-207.
\bibitem{PR6} H. P. Petersson, M. Racine, \emph{Classification of algebras arising from the Tits process},  J. Algebra {\bf 98}  (1986),  no. 1, 244--279.
\bibitem{PR7} H. P. Petersson, M. Racine, \emph{Jordan algebras of degree $3$ and the Tits process}, J. Algebra {\bf 98} (1986), no.1, 211-243.
  \bibitem{PT} H. P. Petersson, Maneesh Thakur, \emph{The \'{e}tale Tits process of Jordan algebras revisited}, J. Algebra {\bf 273} ({\bf 2004}), no.1, 88-107.  
\bibitem{PRap} V. P. Platonov and A. Rapinchuk, \emph{Algebraic groups and number theory}, Pure and Applied Mathematics, Vol. 139, Academic Press Inc., Boston, MA, 1994.
\bibitem{P-T-W} R. Parimala, J. P. Tignol, Richard M. Weiss, \emph{The Kneser-Tits conjecture for groups with Tits-index $E^{66}_{8,2}$ over an arbitrary field}, Transform. Groups. {\bf 17} (2012), no. 1, 209-231. 
\bibitem{Pra} Gopal Prasad, \emph{On the Kneser-Tits problem for triality forms}, Comment. Math. Helv. {\bf 83} (2008), no. 4, 913-925. 
\bibitem{SV} T. A. Springer and F. D. Veldkamp, \emph{Octonions, Jordan Algebras and Exceptional Groups}, Springer Monographs in Mathematics, Springer-Verlag, Berlin, (2000). 
\bibitem{Spr} T. A. Springer, \emph{Linear Algebraic Groups}, Second Edition, Progress in Mathematics, Birkhauser, Boston, 1998.
\bibitem{Th} M. Thakur, \emph{Isotopy and invariants of Albert algebras}, Comment. Math. Helv. {\bf 74} (1999), 297-305.
\bibitem{Th-1} Maneesh Thakur. \emph{Automorphisms of Albert algebras and a conjecture of Tits and Weiss}, Trans. AMS. {\bf 365} (2013), no. 6., 3041-3068.
\bibitem{Th-2} Maneesh Thakur, \emph{Automorphisms of Albert algebras and a conjecture of Tits and Weiss II}, Trans. AMS. {\bf 372} (2019), no. 7., 4701-4728.
\bibitem{Th-3} Maneesh Thakur, \emph{The cyclicity problem for Albert algebras}, to appear in Israel Journal of Math., preprint available at https://arxiv.org/abs/1908.02942
  \bibitem{Th-4} Maneesh Thakur, \emph{Albert algebras and the Tits-Weiss conjecture}, submitted, preprint available at the URL https://arxiv.org/abs/1911.04976
\bibitem{Vos} V. E. Voskresenskii, \emph{Algebraic Groups and Their Birational Invariants}, Translation of Math. Monographs, Vol 179, AMS, Providence, RI, 1998.

\end{thebibliography}
\end{document}